\newtheorem{dfn}{Definition}[section]
\newtheorem{prop}[dfn]{Proposition}
\newtheorem{thm}[dfn]{Theorem}
\newtheorem{cor}[dfn]{Corollary}
\theoremstyle{definition}
\newtheorem{rem}[dfn]{Remark}
\begin{document}
\title{DG categories and exceptional collections}

\author[A.~Bodzenta]{Agnieszka Bodzenta}
\address{Faculty of Mathematics, Informatics and Mechanics,
         University of Warsaw,
	 Banacha 2,
	 02-097 Warsaw, Poland}
\email{a.bodzenta@mimuw.edu.pl}

\keywords{Exceptional collections, DG category, non-commutative deformation}

\begin{abstract}
In \cite{bib_BK} Bondal and Kapranov describe how to assign to a full exceptional collection on a variety $X$ a DG category $\mathcal{C}$ such that the bounded derived category of coherent sheaves on $X$ is equivalent to the bounded derived category of $\mathcal{C}$. In this paper we show that the category $\mathcal{C}$ has finite dimensional spaces of morphisms. We describe how it behaves under mutations and present an algorithm allowing to calculate it for full exceptional collections with vanishing Ext$^k$ groups for $k>1$. Finally, we use it to describe an example of a non-commutative deformation of certain rational surfaces.
\end{abstract}

\let\thefootnote\relax\footnote{The author is supported by a Polish MNiSW grant (contract number N N201 420639). }

\maketitle 

\section*{Introduction}

Derived categories of coherent sheaves have been an object of vivid interest since their discovery in 1960's. Full strong exceptional collections are one of the most important tools used to study them. As proved in \cite{bib_B}, such a collection on a smooth projective variety $X$ provides an elegant description of the category $D^b(\textrm{Coh}(X))$ -- it is equivalent to a derived category of modules over a quiver with relations. If an exceptional collection $\sigma$ is full but not strong the description becomes more complicated. In \cite{bib_BK} Bondal and Kapranov prove that in this case the category $D^b(\textrm{Coh}(X))$ is equivalent to a bounded derived category of modules over some DG category $\mathcal{C}_\sigma$. It follows from the construction that $\mathcal{C}_\sigma$ has finitely many objects.

The DG category $\mathcal{C}_\sigma$ is a full subcategory of an enhancement of $D^b(\textrm{Coh}(X))$ -- a DG category $\widetilde{\mathcal{C}}$ such that $H^0(\widetilde{\mathcal{C}})$ is equivalent to $D^b(\textrm{Coh}(X))$. Lunts and Orlov in \cite{bib_LO} show that this enhancement is strongly unique. Canonically the DG category $\widetilde{\mathcal{C}}$ is a subcategory of complexes of injective sheaves. Such a description suggests that the category $\mathcal{C}_\sigma$ has infinitely dimensional spaces of morphisms between objects and therefore it is inconvenient to work with.

In this paper, using the theory of $A_\infty$ categories, we show that $\mathcal{C}_\sigma$ can be in fact chosen to be finite and oriented. It means that one can partially order objects of $\mathcal{C}_\sigma$ in such a way that there are no nontrivial morphisms from $C_1$ to $C_2$ if $C_2 \preceq C_1$. Furthermore, all spaces of morphisms in $\mathcal{C}_\sigma$ are finite dimensional. Thus, $\mathcal{C}_\sigma$ can be graphically presented as a DG quiver -- a quiver with grading and differential on arrows.

The set of full exceptional collections on $X$ admits an action of the braid group. It is given by mutations defined in \cite{bib_B}. Using twisted complexes we lift this action to associated DG categories. It allows us to describe the category $\mathcal{C}_\sigma$ for any collection $\sigma$ which can be mutated to a strong one.

Families of full exceptional collections on rational surfaces have been given in \cite{bib_HP}. Hille and Perling also describe how to obtain a tilting object from a full exceptional collection with Ext$^k$ groups vanishing for $k>1$. Using their idea of universal extensions we can calculate the DG category $\mathcal{C}_\sigma$ for any full exceptional collection $\sigma = \langle \mathcal{E}_1,\ldots, \mathcal{E}_n \rangle$ with $\textrm{Ext}^k(\mathcal{E}_i, \mathcal{E}_j) = 0$ for $k>1$. In particular, this condition is satisfied by full exceptional collections of line bundles on  smooth rational surfaces.

We calculate the category $\mathcal{C}_\sigma$ for an exceptional collection on $\mathbb{P}^2$ blown up along a subscheme of degree two supported at one point. It turns out that finding a triple Massey product of morphisms in this collection is the most important part of the whole calculation. 

Full strong exceptional collections have also been used in \cite{bib_AKO}, \cite{bib_HI} and \cite{bib_P} to describe non-commutative deformations of varieties. Knowledge of the DG categories of not necessarily strong exceptional collections can lead to a generalisation of this idea. We present an example of a non-commutative deformation from $\mathbb{P}^2$ blown up in two different points to $\mathbb{P}^2$ blown up along a subscheme of degree 2 supported at one point.

 \section{Enhanced triangulated categories}
Let us recall definitions and facts about enhanced triangulated categories and exceptional collections after \cite{bib_B}, \cite{bib_BK}, \cite{bib_BLL} and \cite{bib_K3}.

\subsection*{Exceptional collections and mutations}

Let $\mathcal{D}$ be a $\mathbb{C}$-linear triangulated category. An object $E\in \mathcal{D}$ is called \emph{exceptional} if $\mathcal{D}(E,E) = \mathbb{C}$ and $\mathcal{D}(E,E[i]) = 0$ for $i\neq 0$. An ordered collection \mbox{$\sigma = \langle E_1, \ldots, E_n\rangle$} of exceptional objects is called an \emph{exceptional collection} if \mbox{$\mathcal{D}(E_i, E_j[k]) = 0$} for $i>j$ and all $k$. An exceptional collection $\sigma$ is \emph{strong} if we also have \mbox{$\mathcal{D}(E_i, E_j[k]) = 0$} for $k\neq 0$ and all $i$, $j$. It is \emph{full} if the smallest strictly full subcategory of $\mathcal{D}$ containing $E_1,\ldots, E_n$ is equal to $\mathcal{D}$.
 
Let $ \langle E, F\rangle $ be an exceptional pair in $\mathcal{D}$. Then $ \langle L_EF,E\rangle $ and $ \langle F, R_FE\rangle $ are also exceptional pairs for $L_EF$ and $R_FE$ defined by means of distinguished triangles in $\mathcal{D}$.

\begin{align*}
L_EF \to \mathcal{H}om_\mathcal{D}(E,F) \otimes E \to F \to L_EF[1],\\
E \to \mathcal{H}om_\mathcal{D}(E,F)^*\otimes F \to R_FE\to E[1].
\end{align*}

Here, $\mathcal{H}om_\mathcal{D}(E,F)$ denotes a complex of $\mathbb{C}$-vector spaces with trivial differential; $\mathcal{H}om_\mathcal{D}(E,F) = \bigoplus \mathcal{D}(E,F[k])$. For an element $E \in \mathcal{D}$ and a complex $V^\bullet$ the tensor product is defined by \mbox{$E\otimes V^\bullet = \bigoplus_{k\in \mathbb{Z}} \bigoplus_{i=0}^{\textrm{dim}V^k}E[-k]$}.

For an exceptional collection $\sigma = \langle E_1,\ldots,E_n\rangle$ the $i$-th left mutation $L_i\sigma$ and the $i$-th right \emph{mutation} $R_i\sigma$ are defined by 
\begin{align*}
L_i\sigma = \langle E_1, \ldots, E_{i-1}, L_{E_i}E_{i+1}, E_i, E_{i+2},\ldots, E_n  \rangle,\\
R_i\sigma = \langle E_1,\ldots, E_{i-1}, E_{i+1}, R_{E_{i+1}}E_i, E_{i+2},\ldots, E_n \rangle.
\end{align*}

Mutations define an action of the braid group on the set of full exceptional collections. 

\subsection*{Enhanced triangulated categories}

\begin{dfn}
A \emph{DG category} is a preadditive category $\mathcal{C}$ in which abelian groups $\mathcal{C}(A,B)$ are endowed with a $\mathbb{Z}$-grading and a differential $\partial$ of degree one. The composition of morphisms 
$$
\mathcal{C}(A,B)\otimes \mathcal{C}(B,C)\to \mathcal{C}(A,C)
$$
is a morphism of complexes and for any object $C\in\mathcal{C}$ the identity morphism $\textrm{id}_C$ is a closed morphism of degree zero.
\end{dfn}

We assume that all DG categories are $\mathbb{C}$ - linear.

For an element $x$ in a graded vector space we will denote by $|x|$ the grading of $x$. By $\mathcal{C}^i(A,B)$ we will denote morphisms of degree $i$.

A DG category $\mathcal{C}$ is \emph{ordered} if there exists a partial order $\preceq $ on the set of objects such that $\mathcal{C}(A,B) =0$ for $B\preceq A$. It is \emph{finite} if the set of objects of $\mathcal{C}$ is finite and for any $C_1, C_2 \in \mathcal{C}$ the vector space $\mathcal{C}(C_1,C_2)$ is finite dimensional.

To a DG category $\mathcal{C}$ we associate three categories. The graded category  $\mathcal{C}^\textrm{gr}$ is obtained from $\mathcal{C}$ by forgetting the differentials on morphisms while the \emph{homotopy category} $H(\mathcal{C})$ has the same objects as $\mathcal{C}$ and morphisms given by the cohomology groups of morphisms in $\mathcal{C}$. A further restriction to the zeroth cohomology gives a preadditive category $H^0(\mathcal{C})$.

A morphism $s:C\to C'$ in $\mathcal{C}$ is a \emph{homotopy equivalence} if $H(s)$ is an isomorphism. Then we say that $C$ and $C'$ are \emph{homotopy equivalent}.

A \emph{DG functor} between two DG categories $\mathcal{C}$ and $\mathcal{C}'$ is an additive functor \mbox{$F:\mathcal{C}\to \mathcal{C}'$} which preserves the grading and differential on morphisms. DG functors between DG categories form a DG category. 

Let \mbox{$F, G: \mathcal{C}_1 \to \mathcal{C}_2$} be DG functors. To construct a DG category \mbox{DG-Fun$(\mathcal{C}_1,\mathcal{C}_2)$} we put \mbox{$\textrm{DG-Fun}(\mathcal{C}_1,\mathcal{C}_2)^k(F,G)$} to be the set of natural transformations \mbox{$t:F^\textrm{gr} \to G^\textrm{gr}[k]$} (i.e. for $C\in \mathcal{C}_1$ we have $t_C\in \mathcal{C}_2^k(F(C),G(C))$).  The differential $\partial$ is defined pointwise; for $t_C:F(C) \to G[k](C)$ we have \mbox{$(\partial(t))_C = \partial (t_C):F(C) \to G[k-1](C)$}. 

The category of contravariant DG functors is denoted by DG-Fun$^{\textrm{o}}(\mathcal{C},\mathcal{C}')$. A functor $F:\mathcal{C} \to \mathcal{C}'$ is a \emph{quasi-isomorphism} if $H(F):H(\mathcal{C}) \to H(\mathcal{C}')$ is an isomorphism.

The category $\textrm{DGVect}_\mathbb{C}$ of complexes of vector spaces with homogenous morphisms $f:V^\bullet \to W^\bullet$, $f(V^i) \subset W^{i+k}$ and a differential 
$$
\partial(f)^i = \partial_W f^{i+1} - (-1)^{|f|} f^{i+1}\partial_V
$$
is a DG category. 

A \emph{right DG module} $M$ over a DG category $C$ is an element of $\textrm{DG-Fun}^{\textrm{o}}(\mathcal{C},\textrm{DGVect}_\mathbb{C})$. After \cite{bib_K3} we define a derived category $D(\mathcal{C})$ as a localization of $H^0(\textrm{DG-Fun}^{\textrm{o}}(\mathcal{C},\textrm{DGVect}_\mathbb{C}))$ with respect to the class of homotopy equivalences. By $D^b(\mathcal{C})$ we will denote the subcategory of $D(\mathcal{C})$ formed by compact objects. The Yoneda embedding gives a functor \mbox{$h: \mathcal{C} \to D^b(\mathcal{C})$} which assigns to every $C\in \mathcal{C}$ a free module \mbox{$h_C = \mathcal{C}(-,C)$}.

For a DG category $\mathcal{C}$ we define the category $\widehat{\mathcal{C}}$ of formal shifts. The objects of $\widehat{\mathcal{C}}$ are of the form $C[n]$ where $C\in \mathcal{C}$ and $n\in\mathbb{N}$. For elements $C_1[k]$ and $C_2[n]$ of $\widehat{\mathcal{C}}$ we put $\textrm{Hom}^l_{\widehat{\mathcal{C}}}
(C_1[k], C_2[n]) = \textrm{Hom}^{l+n-k}_\mathcal{C}(C_1, C_2)$. For appropriate sign convention see \cite{bib_BLL}.

Let $B, C$  be objects of a DG category $\mathcal{C}$ and let \mbox{$f\in \mathcal{C}(B, C)$} a closed morphism. Assume that $B[1]$ is also an object of $\mathcal{C}$; i.e. there exists an object $B'$ and closed morphisms $t:B\rightarrow B'$, $t':B'\rightarrow B$ of degree 1 and -1 respectively such that $t't = \textrm{id}_B$ and $tt' = \textrm{id}_{B'}$. An object $D\in \mathcal{C}$ is called a \emph{cone} of $f$ if there exist morphisms
\[
\xymatrix{B' \ar[r]^i & D \ar[r]^p & B', &  & C \ar[r]^j & D \ar[r]^s & C,}
\]
such that 
$$
pi=1, \quad sj=1, \quad si=0, \quad pj=0, \quad ip+js =1.
$$
It is proved in \cite{bib_BLL} that the cone of closed degree zero morphism is 
uniquely defined up to a DG isomorphism.

One can formally add cones of closed morphisms to a DG category $\mathcal{C}$ by considering the category $\mathcal{C}^\textrm{pre-tr}$ of one-sided twisted complexes over $\mathcal{C}$.

\begin{dfn}
A \emph{one-sided twisted complex} over a DG category $\mathcal{C}$ is an expression $(\bigoplus_{i=1}^n C_i[r_i], q_{i,j})$ where $C_i$'s are objects of $\mathcal{C}$, $r_i\in \mathbb{Z}$, $n\geq 0$ and $q_{i,j}\in\mathcal{C}^1(C_i[r_i], C_j[r_j])$ such that $q_{i,j}=0$ for $i\geq j$ and $\partial q + q^2 = 0$.

One-sided twisted complexes over $\mathcal{C}$ form a DG category $\mathcal{C}^\textrm{pre-tr}$ with the morphism space between \mbox{$C=(\bigoplus C_i[r_i], q)$} and $C'=(\bigoplus C'_j[r'_j], q')$ given by the set of matrices $f=(f_{i,j})$, $f_{i,j}: C_i[r_i] \to C'_j[r'_j]$. With a differential defined as
$$
\partial (f) = (\partial f_{i,j}) + q'f - (-1)^{\textrm{deg}f}fq
$$
the category $\mathcal{C}^\textrm{pre-tr}$ is a DG category. We will denote its zeroth homotopy category $H^0(\mathcal{C}^\textrm{pre-tr})$ by $\mathcal{C}^\textrm{tr}$.
\end{dfn}

Let $C=(\bigoplus C_i[r_i], q_{i,j})$ be an object of $\mathcal{C}^\textrm{pre-tr}$. The shift of $C$ is defined as $C[1]=(\bigoplus C_i[r_i+1], - q_{i,j})$. The category $\mathcal{C}^\textrm{pre-tr}$ is closed under formal shifts.
 
Let $f:C\to D$ be a closed morphism of degree 0 in $\mathcal{C}^\textrm{pre-tr}$, where \mbox{$C=(\bigoplus_{i=1}^n C_i[r_i], q_{i,j})$,} $D=(\bigoplus_{i=1}^mD_i[s_i],p_{i,j})$. \emph{The cone} of $f$ is a twisted complex $C(f)=(\bigoplus_{i=1}^{n+m}E_i[t_i], u_{i,j})$, where
\begin{align*}
E_i & = \left\{ \begin{array}{ll} C_i & \textrm{for } i\leq n,\\
D_{i-n} &\textrm{for } i>n,\end{array}  \right.\\
t_i & = \left\{ \begin{array}{ll} r_i + 1 & \textrm{for } i\leq n,\\
s_i & \textrm{for } i>n, \end{array}   \right.\\
u_{i,j} & = \left\{ \begin{array}{ll} q_{i,j} & \textrm{for } i,j \leq n, \\
f_{i, j-n} & \textrm{for } i\leq n < j,\\
p_{i-n,j-n} & \textrm{for } i,j > n. \end{array}  \right.
\end{align*}

The \emph{convolution} functor $\textrm{Tot} : (\mathcal{C}^\textrm{pre-tr})^\textrm{pre-tr} \to \mathcal{C}^\textrm{pre-tr}$ establishes a quasi-isomorphism between $(\mathcal{C}^\textrm{pre-tr})^\textrm{pre-tr}$ and $\mathcal{C}^\textrm{pre-tr}$. Let $C_i = (\bigoplus_{j=1}^{n_i} D^i_j[r^i_j],q^i_{jk})$ be objects of $\mathcal{C}^\textrm{pre-tr}$ and let $C=(\bigoplus_{i=1}^n C_i[r_i], q_{ij})$ be a twisted complex in  $(\mathcal{C}^\textrm{pre-tr})^\textrm{pre-tr}$. Then the convolution Tot$(C)$ is equal to \mbox{$(\bigoplus_{i=1}^n\bigoplus_{j=1}^{n^i_j}D^i_j[r^i_j +r_i],q^i_{jk}+q_{ij})$}.

The DG category $\mathcal{C}$ is \emph{pretriangulated} if the embedding $H^0(\mathcal{C}) \to \mathcal{C}^{tr}$ is an equivalence. The category $H^0(\mathcal{C})$ for a pretriangulated category is triangulated. 

A triangulated category $\mathcal{D}$ is \emph{enhanced} if it has an \emph{enhancement} -- a pretriangulated category $\mathcal{C}$ such that $\mathcal{D}$ is equivalent to $H^0(\mathcal{C})$.

Recall, that an additive category is Karoubian if every projector splits. The category $\mathcal{C}^\textrm{tr}$ needs not to be Karoubian. As showed in \cite{bib_BLL} the category $D^b(\mathcal{C})$ is the Karoubization of $\mathcal{C}^\textrm{tr}$.

A standard example of an enhanced triangulated category is the derived category $D(\mathcal{A})$ of an abelian category $\mathcal{A}$ with enough injectives $I$. Its enhancement is the category of complexes of injective sheaves $\textrm{Kom}(I)$.

With the above definitions we are ready to state the following theorem. 

\begin{thm}\label{thm:generating}\emph{[Theorem 1 of \cite{bib_BK}]}
Let $\widetilde{\mathcal{C}}$ be a pretriangulated category, $E_1,\ldots,E_n$ objects of $\widetilde{\mathcal{C}}$ and $\mathcal{C} \subset \widetilde{\mathcal{C}}$ the full DG subcategory on the objects $E_i$. Then the smallest triangulated subcategory of $H^0(\widetilde{\mathcal{C}})$ containing $ E_1,\ldots,E_n$ is equivalent to $\mathcal{C}^\textrm{tr}$ as a triangulated category.
\end{thm}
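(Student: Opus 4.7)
The plan is to construct a fully faithful triangulated functor $\mathcal{C}^{\textrm{tr}} \to H^0(\widetilde{\mathcal{C}})$ whose essential image is exactly the smallest triangulated subcategory containing the $E_i$, and thereby obtain the claimed equivalence. The natural candidate is the functor induced by applying the inclusion $\iota : \mathcal{C} \hookrightarrow \widetilde{\mathcal{C}}$ termwise to twisted complexes. So first I would define a DG functor $\iota^{\textrm{pre-tr}} : \mathcal{C}^{\textrm{pre-tr}} \to \widetilde{\mathcal{C}}^{\textrm{pre-tr}}$ by sending $(\bigoplus C_i[r_i], q_{ij})$ to $(\bigoplus \iota(C_i)[r_i], \iota(q_{ij}))$ and checking that the morphism matrices transform correctly under the differential.

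Next I would verify that $\iota^{\textrm{pre-tr}}$ is fully faithful as a DG functor. This is essentially formal: morphisms between twisted complexes in either pre-tr category are matrices whose entries live in Hom complexes of the underlying DG categories, and since $\mathcal{C} \subset \widetilde{\mathcal{C}}$ is full, the entrywise Hom spaces coincide. Both the grading shift and the twisted differential $\partial f + q'f - (-1)^{\deg f}fq$ are defined purely in terms of these entry-level data, so the induced map on Hom complexes is an isomorphism of complexes. Passing to $H^0$ gives a fully faithful functor $\Phi : \mathcal{C}^{\textrm{tr}} \to \widetilde{\mathcal{C}}^{\textrm{tr}}$; composing with the equivalence $\widetilde{\mathcal{C}}^{\textrm{tr}} \simeq H^0(\widetilde{\mathcal{C}})$ provided by the pretriangulated hypothesis, I get a fully faithful functor into $H^0(\widetilde{\mathcal{C}})$. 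Because $\mathcal{C}^{\textrm{pre-tr}}$ is itself pretriangulated, $\Phi$ is automatically triangulated with respect to the triangulated structures on source and target.

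Finally I would identify the essential image of $\Phi$ with the smallest triangulated subcategory $\mathcal{T} \subset H^0(\widetilde{\mathcal{C}})$ containing $E_1,\ldots,E_n$. The inclusion $\textrm{Im}(\Phi) \subset \mathcal{T}$ follows by induction on the length of the twisted complex: a complex of length one is a shift of some $E_i$, and a complex of length $n+1$ is the cone, in $H^0(\widetilde{\mathcal{C}})$, of a closed degree-zero morphism between complexes of smaller length (using the explicit cone $C(f)$ from the pre-tr construction). Conversely, $\textrm{Im}(\Phi)$ contains every $E_i$, is closed under shifts (since $\mathcal{C}^{\textrm{pre-tr}}$ is), and is closed under taking cones of morphisms between its objects, because every morphism in $H^0(\widetilde{\mathcal{C}})$ between images of twisted complexes lifts, by full faithfulness, to a morphism in $\mathcal{C}^{\textrm{tr}}$ whose cone in $\mathcal{C}^{\textrm{pre-tr}}$ maps to a cone in $\widetilde{\mathcal{C}}$. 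Hence $\textrm{Im}(\Phi)$ is a triangulated subcategory containing the generators, so it contains $\mathcal{T}$.

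The routine parts are the sign bookkeeping for $\iota^{\textrm{pre-tr}}$ and the compatibility of cones under $\Phi$. The main obstacle is the lifting argument in the last step: one needs to be careful that every morphism in $H^0(\widetilde{\mathcal{C}})$ between objects coming from $\mathcal{C}^{\textrm{pre-tr}}$ is realized, up to homotopy, by a morphism of twisted complexes in $\mathcal{C}^{\textrm{pre-tr}}$ and not just in $\widetilde{\mathcal{C}}^{\textrm{pre-tr}}$; this is precisely what full faithfulness of $\iota^{\textrm{pre-tr}}$ on Hom complexes guarantees, so the real content of the theorem is bundled into the fully-faithful step.
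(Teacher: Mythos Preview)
The paper does not give a proof of this statement: it is quoted as Theorem~1 of \cite{bib_BK} and used as a black box. Your proposal is a correct outline of the standard Bondal--Kapranov argument (construct the induced DG functor $\mathcal{C}^{\textrm{pre-tr}}\to\widetilde{\mathcal{C}}^{\textrm{pre-tr}}$, check full faithfulness entrywise, then identify the essential image by induction on the length of twisted complexes), so there is nothing in the paper to compare against beyond the reference itself.
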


It follows that a full exceptional collection $\sigma = \langle E_1, \ldots, E_n\rangle$ in an enhanced triangulated category $\mathcal{D}$ leads to an equivalence of $\mathcal{D}$ and $\mathcal{C}_\sigma^\textrm{tr}$ for some DG category $\mathcal{C}_\sigma$. The category $\mathcal{C}_\sigma$ is a subcategory of the enhancement of $\mathcal{D}$ and thus $H^i(\mathcal{C}_\sigma(E,F)) = \mathcal{D}(E,F[i])$ for any elements $E$, $F$ of $\sigma$.

\begin{rem} If an enhanced triangulated category $\mathcal{D}$ is Karoubian, a full exceptional collection $\sigma$ leads to an equivalence of $\mathcal{D}$ and $D^b(\mathcal{C}_\sigma)$.
\end{rem}

\section{Properties of DG categories associated to full exceptional collections}

From now on let $X$ be a smooth projective variety. Let $D^b(X) = D^b(\textrm{Coh}(X))$ denote the bounded derived category of the category of coherent sheaves on $X$. It is an enhanced Karoubian triangulated category -- its enhancement is given by $\underline{Kom}^b(I)$ -- the DG category of complexes of injective sheaves bounded from below and with finitely many nonzero coherent cohomology groups.

Let $\sigma = \langle \mathcal{E}_1, \ldots, \mathcal{E}_n \rangle$ be a full exceptional collection of coherent sheaves on $X$. By Theorem \ref{thm:generating} there exists a DG category $\mathcal{C}_\sigma$ with objects $\mathcal{E}_1, \ldots, \mathcal{E}_n$ such that $D^b(X)$ is equivalent to $\mathcal{C}_\sigma^{\textrm{tr}}$. As $D^b(X)$ is Karoubian the same is true about $\mathcal{C}_\sigma^\textrm{tr}$ and hence $\mathcal{C}_\sigma^\textrm{tr} = D^b(\mathcal{C}_\sigma)$. As $\sigma$ is an exceptional collection the category $H(\mathcal{C}_\sigma)$ is ordered and finite.

As two quasi-isomorphic DG categories have equivalent derived categories, the category $\mathcal{C}_\sigma$ is determined up to a quasi-isomorphism.

\begin{rem}
If a DG category $\mathcal{C}$ has only zeroth cohomology then it is quasi-isomorphic to $H(\mathcal{C}) = H^0(\mathcal{C})$. Indeed,
for $C_1,C_2\in \mathcal{C}$ let $\mathcal{C}(C_1,C_2) = \bigoplus_{n\in \mathbb{Z}} \mathcal{C}^n (C_1,C_2)$ with differential 
$$
\partial_{C_1,C_2}^n:\mathcal{C}^n(C_1,C_2) \to \mathcal{C}^{n+1}(C_1,C_2)
$$ 
and let $\mathcal{C}_I$ be a DG category such that $\textrm{ob } \mathcal{C}_I = \textrm{ob } \mathcal{C}$ and 
$$
\mathcal{C}_I(C_1, C_2) = \bigoplus_{n<0} \mathcal{C}^n(C_1,C_2) \oplus \textrm{ker}\partial_{C_1,C_2}^0.
$$
Then the natural inclusion functor $\mathcal{C}_I\to \mathcal{C}$ is a quasi-isomorphism. Let us also set 
$$
J_{C_1,C_2} = \textrm{Im}(\partial_{C_1,C_2}^{-1}) \oplus (\bigoplus_{n<0} \mathcal{C}^n(C_1,C_2))
$$
for any $C_1$ and $C_2$ in $\mathcal{C}$ and consider the category $\mathcal{C}_{I/J}$ with $\textrm{ob }\mathcal{C}_{I/J} = \textrm{ob }\mathcal{C}$ and 
$$
\mathcal{C}_{I/J}(C_1,C_2) = \mathcal{C}_I(C_1,C_2)/J_{C_1,C_2}.
$$
Then $\mathcal{C}_{I/J}$ is isomorphic to $H(\mathcal{C})$ and the natural functor $\mathcal{C}_I \to \mathcal{C}_{I/J}$ is a quasi-isomorphism.

It shows that if $\sigma$ is a strong exceptional collection the DG category $\mathcal{C}_\sigma$ is quasi-isomorphic to an ordinary category.
\end{rem}

\subsection*{Finiteness}

As the enhancement of $\mathcal{D}$ is $\underline{\textrm{Kom}}^b(I)$, calculating the category $\mathcal{C}_\sigma$ requires taking injective resoultions. This suggest that the category $\mathcal{C}_\sigma$ can have infinitely dimensional morphisms spaces. However, it is $A_\infty$-quasi-isomorphic to an ordered DG category with finite-dimensional morphisms between objects.

\subsubsection*{$A_\infty$ categories}
We recall definitions after \cite{bib_K1} and \cite{bib_L}.

\begin{dfn}
An \emph{$A_{\infty}$ category} $\mathcal{A}$ over $\mathbb{C}$ consists of
\begin{itemize}
\item objects $\textrm{ob}(\mathcal{A})$,
\item for any two $A, B \in \textrm{ob}(\mathcal{A})$ a $\mathbb{Z}$ - graded $\mathbb{C}$-vector space $\mathcal{A}(A,B)$,
\item for any $n\geq 1$ and a sequence $A_0,A_1, \ldots, A_n \in \textrm{ob}(\mathcal{A})$ a graded map:
$$
m_n:\mathcal{A}(A_{n-1},A_{n})\otimes\ldots\otimes\mathcal{A}(A_0,A_1) \to \mathcal{A}(A_0, A_n)
$$
of degree $2-n$ such that for any $n$
$$
\sum_{r+s+t = n} (-1)^{r+st} m_{r+1+t}(\textrm{id}^{\otimes r}\otimes m_s \otimes \textrm{id}^{\otimes t}) = 0.
$$
\end{itemize}
\end{dfn}
 
Note that when these formulae are applied to elements additional signs appear because of the Koszul sign rule:
$$
(f\otimes g)(x\otimes y) = (-1)^{|x||g|}f(x)\otimes g(y).
$$

An $A_{\infty}$ \emph{algebra} is an $A_{\infty}$ category with one object.

An $A_\infty$ category $\mathcal{A}$ is \emph{ordered} if there exists a partial order $\preceq$ on the set $\textrm{ob}(\mathcal{A})$ such that $\mathcal{A}(A,A') = 0$ for $A'\preceq A$. It is \emph{finite} if ob$(\mathcal{A})$ is a finite set and $\mathcal{A}(A,A')$ is finite-dimensional for any $A$ and $A'$.

The operation $m_1$ gives for any $A, B \in \textrm{ob}(\mathcal{A})$ a structure of a complex on $\mathcal{A}(A,B)$. The homotopy category with respect to $m_1$ is denoted by $H(\mathcal{A})$.

The $A_\infty$ category is \emph{minimal} if the operation $m_1$ is trivial.

Any DG category can be regarded as an $A_{\infty}$ category with $m_1$ given by the differential, $m_2$ given by composition and trivial $m_i$'s for $i > 2$.

For any set $S$ there exists an $A_\infty$ category $\mathbb{C}\{S\}$. The objects of $\mathbb{C}\{S\}$ are elements of $S$ and
$$
\mathbb{C}\{S\}(s_1,s_2) = \left\{\begin{array}{ll}\mathbb{C} & \textrm{if } s_1 = s_2,\\
0 & \textrm{otherwise.}  \end{array} \right.
$$
All operations $m_n$ in $\mathbb{C}\{S\}$ are trivial.
\begin{dfn}
A functor of $A_{\infty}$ categories $F: \mathcal{A} \to \mathcal{B}$ is a map \mbox{$F_0: \textrm{ob}(\mathcal{A})\to \textrm{ob}(\mathcal{B})$} and a family of graded maps 
$$
F_n:\mathcal{A}(A_{n-1},A_{n})\otimes\ldots\otimes\mathcal{A}(A_0,A_1) \to \mathcal{B}(F_0(A_0), F_0(A_n))
$$
of degree $1-n$ such that 
$$
\sum_{r+s+t=n}(-1)^{r+st}F_{r+1+t}(\textrm{id}^{\otimes r}\otimes m_s \otimes \textrm{id}^{\otimes t}) = \sum_{i_1 + \ldots + i_r=n}(-1)^p m_r(F_{i_1}\otimes \ldots \otimes F_{i_r}), 
$$
where $p=(r - 1)(i_1 - 1) + (r - 2)(i_2 - 1) + \ldots + 2(i_{r-2} - 1) + (i_{r-1} - 1).$

Composition of functors is given by
$$
(F\circ G)_n = \sum_{i_1+\ldots+i_s = n}F_s\circ (G_{i_1}\otimes\ldots \otimes G_{i_s}).
$$
\end{dfn}

A functor $F:\mathcal{A}\to \mathcal{B}$ is called an \emph{$A_\infty$-quasi-isomorphism} if $F_1$ is a quasi-isomorphism. 

An $A_\infty$ category $\mathcal{A}$ is \emph{strictly unital} if for any object $A\in \textrm{ob}(\mathcal{A})$ there exists a morphisms $1_A \in \mathcal{A}(A,A)$ of degree 0 such that for any object $A'$ in $\mathcal{A}$ and any morphisms $\phi\in \mathcal{A}(A,A')$, $\psi \in \mathcal{A}(A',A)$ we have $m_2(\phi,1_A) =\phi$ and \mbox{$m_2(1_A,\psi) = \psi$}. Moreover, for $n\neq 2$ the operation $m_n$ equals 0 if any of its argument is equal to $1_A$. 

In particular, for any set $S$ the category $\mathbb{C}\{S\}$ is strictly unital.

The category $\mathcal{A}$ is \emph{homologically unital} if there exist units for the homotopy category $H(\mathcal{A})$.    

Lef\'evre-Hasegawa in \cite{bib_L} gives a connection between strictly and homologicaly unital categories.
\begin{prop}\label{thm:strict}
Minimal homolgically unital $A_\infty$ category is $A_\infty$-quasi-isomorphic to a minimal strictly unital $A_\infty$ category. 
\end{prop}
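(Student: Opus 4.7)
The plan is to construct, starting from a minimal homologically unital $A_\infty$ category $\mathcal{A}$, a sequence of $A_\infty$-quasi-isomorphisms $\mathcal{A}=\mathcal{A}^{(2)}\to \mathcal{A}^{(3)}\to\cdots$ whose arity-wise stable limit is a minimal strictly unital category $\mathcal{A}'$. Since $\mathcal{A}$ is minimal, $m_1=0$, so $H(\mathcal{A})$ and $\mathcal{A}$ coincide as graded categories, and homological unitality yields, for every object $A$, a closed degree-zero element $e_A\in\mathcal{A}^0(A,A)$ satisfying $m_2(e_A,\phi)=\phi$ and $m_2(\psi,e_A)=\psi$ literally (not just up to $m_1$-homotopy). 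Each $e_A$ is therefore already a strict unit for $m_2$, and the only remaining defect is that $m_n(\ldots,e_A,\ldots)$ may be nonzero for $n\geq 3$.

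I would fix such a family $\{e_A\}$ once and for all, set $\mathcal{A}^{(2)}:=\mathcal{A}$, and induct on $N\geq 2$ under the hypothesis that $\mathcal{A}^{(N)}$ is minimal, $A_\infty$-quasi-isomorphic to $\mathcal{A}$, and satisfies $m_n^{(N)}(\ldots,e_A,\ldots)=0$ for every $3\leq n\leq N$, every $A$, and every position of $e_A$. For the inductive step I would produce an $A_\infty$-functor $F^{(N)}\colon \mathcal{A}^{(N)}\to \mathcal{A}^{(N+1)}$ with $F^{(N)}_1=\textrm{id}$ and $F^{(N)}_k=0$ for $k\notin\{1,N\}$, and with $F^{(N)}_N$ chosen to cancel the defect at arity $N+1$. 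Substituting these vanishings into the $A_\infty$-functor equation one checks immediately that $m_n^{(N+1)}=m_n^{(N)}$ for all $n\leq N$, so minimality and all previously achieved strict-unit relations persist; at arity $N+1$ the same equation collapses to the linear relation
\begin{equation*}
m_{N+1}^{(N+1)}=m_{N+1}^{(N)}+\sum_{r+t=N-1}(-1)^{r}F^{(N)}_N(\textrm{id}^{\otimes r}\otimes m_2\otimes \textrm{id}^{\otimes t})-(-1)^{N-1}m_2(F^{(N)}_N\otimes \textrm{id})-m_2(\textrm{id}\otimes F^{(N)}_N).
\end{equation*}
Evaluating on tuples one of whose inputs is some $e_A$, strict $m_2$-unitality makes the $F^{(N)}_N$-terms telescope with $m_{N+1}^{(N)}$, and an explicit formula for $F^{(N)}_N$ as a signed insertion of $e_A$ into $m_{N+1}^{(N)}$ eliminates the defect.

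The main obstacle is the consistency of this prescription: a single arity-$N$ map $F^{(N)}_N$ must simultaneously cancel the defects produced by $e_A$ in each of the $N+1$ possible input slots, for every object $A$. I would verify this by applying the $A_\infty$ relation for $\mathcal{A}^{(N)}$ at arity $N+2$ to tuples containing one $e_A$: strict $m_2$-unitality together with the inductive vanishing at arities $\leq N$ collapses that identity to exactly the compatibility required for the slot-by-slot prescriptions of $F^{(N)}_N$ to glue into a single well-defined operation. Once this is established, the operations stabilise because $m_n^{(M)}=m_n^{(N)}$ whenever $M\geq N\geq n$; passing to the limit yields a minimal $A_\infty$ category $\mathcal{A}'$ in which every $e_A$ is a strict unit, together with the composite $A_\infty$-quasi-isomorphism $\cdots\circ F^{(3)}\circ F^{(2)}\colon \mathcal{A}\to \mathcal{A}'$.
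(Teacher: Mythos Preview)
The paper does not supply its own proof of this proposition: it is stated immediately after the sentence ``Lef\`evre-Hasegawa in \cite{bib_L} gives a connection between strictly and homologically unital categories'' and is simply quoted from that reference. There is therefore nothing in the paper to compare your argument against.

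On its own merits, your argument follows the standard perturbative strictification (as in Lef\`evre-Hasegawa's thesis or Seidel's \emph{Fukaya categories and Picard--Lefschetz theory}, Lemma~2.1). The reductions you make are correct: minimality forces the homological units $e_A$ to be strict $m_2$-units on the nose; a transfer along an $F$ with $F_1=\mathrm{id}$ and $F_k=0$ for $1<k<N$ leaves $m_n$ unchanged for $n\le N$; and the operations do stabilise arity-wise, so the infinite composite is well defined. Your displayed formula for $m_{N+1}^{(N+1)}$ is the right one (it is exactly what the $A_\infty$-functor relation at arity $N+1$ gives under your vanishing assumptions on $F$).

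The one place where your write-up is genuinely thin is the consistency step. You assert that ``an explicit formula for $F^{(N)}_N$ as a signed insertion of $e_A$ into $m_{N+1}^{(N)}$'' works, and that the $A_\infty$ relation at arity $N+2$ furnishes the required compatibility across slots, but you neither write the formula nor carry out the check. In the standard references the formula is indeed of the shape $F^{(N)}_N(a_N,\ldots,a_1)=\pm\, m^{(N)}_{N+1}(a_N,\ldots,a_1,e)$ (insertion at a fixed end), and verifying that this single choice kills the defect in \emph{every} slot is a genuine, if routine, sign computation using the $A_\infty$ relation together with the inductive hypothesis. As written, your proposal is a correct outline rather than a complete proof; to make it self-contained you should record the explicit $F^{(N)}_N$ and perform the slot-independence verification.
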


\begin{rem}\label{rem_strict}
A minimal $A_\infty$ category is equal to its homotopy category. Hence, an $A_\infty$-quasi-isomorphism $F$ given by the above proposition satisfies  $F_1 = \textrm{id}$.
\end{rem}

An $A_\infty$ category $\mathcal{A}$ is \emph{augmented} if there exists a strict unit preserving functor $\epsilon:\mathbb{C}\{\textrm{ob}(\mathcal{A})\} \to \mathcal{A}$. Then $\mathcal{A}$ decomposes as $\mathcal{A} = \mathbb{C}\{\textrm{ob}(\mathcal{A})\} \oplus \bar{\mathcal{A}}$.

\subsubsection*{Minimal model}\label{ssec:minimal}

Any $A_{\infty}$ category $\mathcal{A}$ is $A_\infty$-quasi-isomorphic to its homotopy category $H(\mathcal{A})$.

\begin{thm}\label{thm_minimal model}\emph{[Kadeishvili \cite{bib_K}]}
If $\mathcal{A}$ is an $A_{\infty}$ category, then $H(\mathcal{A})$ admits an $A_{\infty}$ category structure such that
\begin{enumerate}
\item $m_1=0$ and $m_2$ is induced from $m_2^A$ and
\item there is an $A_{\infty}$-quasi-isomorphism $\mathcal{A} \to H(\mathcal{A})$ inducing the identity on cohomology.
\end{enumerate}
Moreover, this structure is unique up to a non unique $A_{\infty}$-isomorphism.
\end{thm}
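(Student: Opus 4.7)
The plan is to follow Kadeishvili's original inductive argument, which amounts to a chain-level instance of the homological perturbation lemma. Working over $\mathbb{C}$, I would first fix for each pair of objects $A, B$ a Hodge-type decomposition of the complex $(\mathcal{A}(A,B), m_1)$: a linear section $i : H(\mathcal{A})(A,B) \to \textrm{ker}\, m_1$ of the projection $\pi$ onto cohomology classes, together with a degree $-1$ contracting homotopy $h$ satisfying $\textrm{id} - i\pi = m_1 h + h m_1$ and the side conditions $h^2 = hi = \pi h = 0$. These data exist because we are working over a field. All higher products on $H(\mathcal{A})$ and all components of the comparison $A_\infty$-functor will then be expressed inductively in terms of $\pi$, $i$, $h$ and the operations $m_k$ of $\mathcal{A}$.

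I would next construct in tandem the higher operations $\{m_n\}_{n\geq 2}$ on $H(\mathcal{A})$ and the components $\{F_n\}_{n\geq 1}$ of an $A_\infty$-functor $F: H(\mathcal{A}) \to \mathcal{A}$ with $F_1 = i$. Set $m_2$ to be the operation induced on cohomology by $m_2^{\mathcal{A}}$. Assuming that $m_2, \ldots, m_{n-1}$ and $F_1, \ldots, F_{n-1}$ have been constructed so that the $A_\infty$-functor equations hold in arities less than $n$, let $U_n$ denote the sum of all terms in the arity-$n$ $A_\infty$-functor equation that involve only previously defined data. An elementary computation using the $A_\infty$-relations in $\mathcal{A}$ together with the inductive hypothesis shows that $m_1 U_n = 0$. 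One then sets
$$
m_n \; := \; \pi\, U_n, \qquad F_n \; := \; h\, U_n,
$$
and the identity $\textrm{id} - i\pi = m_1 h + h m_1$ forces the missing arity-$n$ equation to hold on the nose. Since $m_1 = 0$ on $H(\mathcal{A})$ and $F_1 = i$ is injective, the $A_\infty$-relations among the transferred $\{m_n\}$ then follow formally from $F$ being an $A_\infty$-functor.

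The quasi-isomorphism $\mathcal{A} \to H(\mathcal{A})$ required by the theorem is obtained either by running the same induction with the roles of $\pi$ and $i$ exchanged, so that the resulting $F_1 = \pi$ directly induces the identity on cohomology, or by inverting the functor just constructed via the standard inversion theorem for $A_\infty$-quasi-isomorphisms. For uniqueness, given two minimal transferred structures $(H(\mathcal{A}), m'_\bullet)$ and $(H(\mathcal{A}), m''_\bullet)$ one composes one of the comparison quasi-isomorphisms with a homotopy inverse of the other to produce an $A_\infty$-quasi-isomorphism $G$ between the two minimal models with $G_1 = \textrm{id}$; since both sides have $m_1 = 0$, such a $G$ is automatically an $A_\infty$-isomorphism, and the non-uniqueness reflects exactly the freedom in the choice of homotopy inverse together with the freedom to modify each $G_n$ by a cocycle.

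The main technical obstacle is the bookkeeping of Koszul signs: verifying that $U_n$ is $m_1$-closed at every inductive step, and that all sign conventions are consistent so that the arity-$n$ $A_\infty$-functor equation really does reduce to $U_n = m_1 F_n + F_1 m_n$ once the definitions of $m_n$ and $F_n$ are substituted. The cleanest framework for carrying out this bookkeeping is the bar/tensor-coalgebra picture, in which an $A_\infty$-structure on $\mathcal{A}$ is a square-zero codifferential on the tensor coalgebra $T(s\mathcal{A})$ and an $A_\infty$-functor is a morphism of differential graded coalgebras; the whole induction is then a direct application of the homological perturbation lemma, and the resulting formulas for $m_n$ and $F_n$ can be written as weighted sums over planar rooted trees with $n$ leaves, with interior vertices labeled by operations $m_k^{\mathcal{A}}$, interior edges by $h$, leaves by $i$, and the root by either $\pi$ or $h$, after Kontsevich and Soibelman.
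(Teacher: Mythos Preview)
Your proposal is correct and follows Kadeishvili's original inductive construction via homological perturbation, which is exactly what the paper is citing. Note that the paper does not actually prove this theorem: it is stated as a result of Kadeishvili \cite{bib_K} and used as a black box, so there is no ``paper's own proof'' to compare against beyond the reference itself. Your sketch of the splitting data $(i,\pi,h)$, the simultaneous induction defining $m_n=\pi U_n$ and $F_n=h U_n$, and the uniqueness argument via inverting $A_\infty$-quasi-isomorphisms between minimal models is the standard route and is precisely the content of the cited source (in the coalgebra formulation you mention at the end). One small point worth tightening: you build $F:H(\mathcal{A})\to\mathcal{A}$ with $F_1=i$, whereas the statement asks for a functor in the opposite direction; your two fixes (swap $\pi$ and $i$, or invoke the inversion theorem for $A_\infty$-quasi-isomorphisms) are both valid, but if you want a self-contained proof you should commit to one and carry it through rather than defer to the inversion theorem, which is itself nontrivial.
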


The $A_{\infty}$ category $H(\mathcal{A})$ is called the \textit{minimal model} of $\mathcal{A}$.

\begin{rem}\label{rem:unit}
Let $\mathcal{C}$ be a DG category. Its minimal model $H(\mathcal{C})$ is $A_\infty$-quasi-isomorphic to a strictly unital $A_\infty$ category. Indeed, the category $\mathcal{C}$ is strictly unital and hence homologically unital. Its homotopy category is also homologically unital and Proposition \ref{thm:strict} guarantees that there exists a stricly unital minimal category $A_\infty$-quasi-isomorphic to $H(\mathcal{C})$.  
\end{rem}
\subsubsection*{The universal DG category}

For any $A_\infty$ category $\mathcal{A}$ there exists a DG category $U(\mathcal{A})$ and an $A_\infty$-quasi-isomorphism $\mathcal{A} \to U(\mathcal{A})$. To define the category $U(\mathcal{A})$ we need the following definitions (see further \cite{bib_L}).
\begin{dfn}
A \emph{DG cocategory} $\mathcal{B}$ consists of 
\begin{itemize}
\item the set of objects $B_i\in \textrm{ob} (\mathcal{B})$, 
\item for any pair of objects $B_i,B_j\in \textrm{ob} (\mathcal{B})$ a complex of $\mathbb{C}$-vector spaces $\mathcal{B}(B_i,B_j)$ with a differential $d^{ij}$ of degree one and
\item a coassociative cocomposition -- a family of linear maps
$$
\Delta:\mathcal{B}(B_i,B_j) \to \sum_{B_k \in \textrm{ob}(\mathcal{B})} \mathcal{B}(B_k,B_j) \otimes \mathcal{B}(B_i, B_j).
$$
\end{itemize}
These data have to satisfy the condition
$$
\Delta \circ d = (d\otimes \textrm{id} + \textrm{id}\otimes d)\circ \Delta.
$$ 
\end{dfn}
For any set $S$ the $A_\infty$ category $\mathbb{C}\{S\}$ is also a DG cocategory.

A functor $\Phi$ between DG cocategories $\mathcal{B}$ and $\mathcal{B}'$ preserves the grading and differentials on morphisms and satisfies the condition
$$
\Delta\circ \Phi = (\Phi\otimes \Phi) \circ \Delta.
$$

A DG cocategory $\mathcal{B}$ is \emph{counital} if it admits a counit -- a functor \mbox{$\eta: \mathcal{B} \to \mathbb{C}\{\textrm{ob}(\mathcal{B})\} $.} The category $\mathcal{B}$ is \emph{coaugmented} if it is counital and admits a coaugmentation functor $\varepsilon: \mathbb{C}\{\textrm{ob}(\mathcal{B})\} \to \mathcal{B}$ such that the composition $\eta\varepsilon$ is the identity on $\mathbb{C}\{\textrm{ob}(\mathcal{B})\}$.

Let $\mathcal{B}$ be a coaugmented DG cocategory.Denote by $\bar{\mathcal{B}}$ a cocategory with the same objects as $\mathcal{B}$ and morphisms $\bar{\mathcal{B}}(B_i,B_j) = \textrm{ker }\varepsilon$.

For an augemnted $A_\infty$ category $\mathcal{A}$ one can define its \emph{bar} DG cocategory $B_\infty(\mathcal{A})$. Recall that as an augmented category  $\mathcal{A}$ can be written as \mbox{$ \bar{\mathcal{A}}\oplus \mathbb{C}\{\textrm{ob}(\mathcal{A})\}$.} Then $B_\infty(\mathcal{A}) = T^c(S\bar{\mathcal{A}})$ is a tensor cocategory of the suspension of $\bar{\mathcal{A}}$. Here $S\bar{\mathcal{A}}$ denotes the category $\bar{\mathcal{A}}$ with a shift in a morphisms spaces $(S\bar{\mathcal{A}})^n(A,A') = \bar{\mathcal{A}}^{n+1}(A,A')$. $S\bar{\mathcal{A}}$ is not an $A_\infty$ category, however the operations $m_n$ in $\bar{\mathcal{A}}$ define graded maps of degree 1 in $S\bar{\mathcal{A}}$.
\begin{align*}
b_n:S\bar{\mathcal{A}}(A_{n-1},A_{n})\otimes&\ldots\otimes S\bar{\mathcal{A}}(A_0,A_1) \to S\bar{\mathcal{A}}(A_0, A_n),\\
b_n &= -s\circ m_n\circ \omega^{\otimes n},
\end{align*}
where $s:V\to SV$ is a suspension of a graded vector space $V$ and $\omega = s^{-1}$.

The cocategory $ B_\infty(\mathcal{A})$ has the same objects as $\mathcal{A}$ and 
\begin{align*}
B_\infty(\mathcal{A})(A,A') & = S\bar{\mathcal{A}}(A,A') \oplus \bigoplus_{A_1\in \textrm{ob}(\mathcal{A})} S\bar{\mathcal{A}}(A_1,A')\otimes S\bar{\mathcal{A}}(A,A_1) \\
\oplus &\bigoplus_{A_1,A_2 \in \textrm{ob}(\mathcal{A})} S\bar{\mathcal{A}}(A_2,A') \otimes S\bar{\mathcal{A}}(A_1,A_2) \otimes S\bar{\mathcal{A}}(A,A_1) \oplus \ldots
\end{align*}
for $A\neq A'$. In the case $A=A'$ we have
\begin{align*}
B_\infty(\mathcal{A})(A,A) &= 1_A \oplus S\bar{\mathcal{A}}(A,A) \oplus \bigoplus_{A_1\in \textrm{ob}(\mathcal{A})} S\bar{\mathcal{A}}(A_1,A)\otimes S\bar{\mathcal{A}}(A,A_1)\\
& \oplus \bigoplus_{A_1,A_2 \in \textrm{ob}(\mathcal{A})} S\bar{\mathcal{A}}(A_2,A) \otimes S\bar{\mathcal{A}}(A_1,A_2) \otimes S\bar{\mathcal{A}}(A,A_1) \oplus \ldots
\end{align*}
where the degree of $1_A$ is zero.
To simplify the notation we shall write $(\alpha_n, \ldots, \alpha_1)$ for $\alpha_n\otimes \ldots\otimes \alpha_1$.
The differential in $B_\infty(\mathcal{A})$ is given by
\begin{align*}
&d(\alpha_n, \ldots,\alpha_1) =\\
& \sum_{k=1}^n \sum_{l=1}^{n-k+1} (-1)^{|\alpha_{l-1}| + \ldots + |\alpha_1|} ( \alpha_n, \ldots  \alpha_{l+k}, b_k(\alpha_{l+k-1},\ldots, \alpha_l),\alpha_{l-1},\ldots  \alpha_1),
\end{align*}
for $(\alpha_n, \ldots,\alpha_1) \in B_\infty(\mathcal{A})(A,A')$. The cocomposition is given by
\begin{align*}
&\Delta(\alpha_n, \ldots,\alpha_1)=\\
&1_{A'}\otimes (\alpha_n, \ldots,\alpha_1) + (\alpha_n, \ldots,\alpha_1)\otimes 1_A + \sum_{l=1}^{n-1}(\alpha_n,\ldots,\alpha_{l+1})\otimes (\alpha_l,\ldots, \alpha_1).
\end{align*}
With these definitions $B_\infty(\mathcal{A})$ is an augmented DG cocategory.

\begin{rem}\label{rem_barfinite}
Let $\mathcal{A}$ be an ordered and finite $A_\infty$ category such that $\bar{\mathcal{A}}(A,A) = 0$ for any object $A\in \textrm{ob}\mathcal{A}$. Then the DG cocategory $B_\infty(\mathcal{A})$ also satisfies these conditions; i.e. is ordered, finite and $\overline{B_\infty(\mathcal{A})}(A,A) = 0$ for any object $A$.
\end{rem}

Analogously, to an augmented DG cocategory $\mathcal{B}$ via a \emph{cobar} construction one can assign a DG category $\Omega(\mathcal{B})$. Let $\mathcal{B}$ be a DG cocategory with a differential $d$ and cocomposition $\Delta$. Its cobar DG category is equal to 
$T(S^{-1}\bar{\mathcal{B}})$. Here $S^{-1}\bar{\mathcal{B}}$ denotes the shift of the cocategory $\mathcal{B}$ and $T(S^{-1}\bar{\mathcal{B}})$ is the tensor DG category of it. As before the morphisms spaces in $T(S^{-1}\bar{\mathcal{B}})$ are given by
\begin{align*}
\Omega(\mathcal{B})(B,B') & = S^{-1}\bar{\mathcal{B}}(B,B') \oplus \bigoplus_{B_1\in \textrm{ob}(\mathcal{B})} S^{-1}\bar{\mathcal{B}}(B_1,B')\otimes S^{-1}\bar{\mathcal{B}}(B,B_1) \\
\oplus &\bigoplus_{B_1,B_2 \in \textrm{ob}(\mathcal{B})} S^{-1}\bar{\mathcal{B}}(B_2,B') \otimes S^{-1}\bar{\mathcal{B}}(B_1,B_2) \otimes S^{-1}\bar{\mathcal{B}}(B,B_1) \oplus \ldots
\end{align*}
for $B\neq B'$ and by
\begin{align*}
\Omega(\mathcal{B})(B,B) &= 1_B \oplus S^{-1}\bar{\mathcal{B}}(B,B) \oplus \bigoplus_{B_1\in \textrm{ob}(\mathcal{B})} S^{-1}\bar{\mathcal{B}}(B_1,B)\otimes S^{-1}\bar{\mathcal{B}}(B,B_1)\\
& \oplus \bigoplus_{B_1,B_2 \in \textrm{ob}(\mathcal{B})} S^{-1}\bar{\mathcal{B}}(B_2,B) \otimes S^{-1}\bar{\mathcal{B}}(B_1,B_2) \otimes S^{-1}\bar{\mathcal{B}}(B,B_1) \oplus \ldots
\end{align*}
for $1_B$ -- a morphism in degree zero. The composition in $\Omega(\mathcal{B})$ is defined by concatenation and the differential $\partial$ on the morphisms spaces is
$$
\partial = \sum 1\otimes\ldots \otimes 1\otimes( d + \Delta) \otimes 1\otimes\ldots\otimes 1.
$$

\begin{rem}\label{rem_cobarfinite}
If an ordered and finite DG cocategory $\mathcal{B}$ satisfies the condition $\bar{\mathcal{B}}(B,B)  = 0$, then the same is true about $\Omega(\mathcal{B})$.
\end{rem}

For an augmented $A_\infty$ category $\mathcal{A}$ its universal DG category $U(A)$ is defined as $\Omega(B_\infty(\mathcal{A}))$. There is a natural map $\mathcal{A}\to U(\mathcal{A})$. Lef\'evre-Hasegawa proves in \cite{bib_L} that this map extends to a functor and is an $A_\infty$-quasi-isomorphism. Moreover, for an $A_\infty$-quasi-isomorphisms $\phi$ the functor $U(\phi)$ is a quasi-isomorphism of DG categories.

\subsubsection*{$A_\infty$ modules}
\begin{dfn}
An $A_\infty$ module over an $A_\infty$ category $\mathcal{A}$ is an $A_\infty$ functor $M:\mathcal{A} \to \textrm{DGVect}_\mathbb{C}$. A morphism of modules $G:M\to N$ is given by a family $\{G_A:M_0(A) \to N_0(A)\}_{A\in \textrm{ob}\mathcal{A}}$ of morphisms in $\textrm{DGVect}_\mathbb{C}$ such that the diagrams
\[
\xymatrix{M_0(A_0) \ar[rr]^{M_n(\alpha_{n-1}\otimes\ldots\otimes\alpha_0)} \ar[d]^{G_{A_0}} & & M_0(A_n)\ar[d]^{G_{A_n}}\\
N_0(A_0)\ar[rr]^{N_n(\alpha_{n-1}\otimes\ldots\otimes\alpha_0)} && N_0(A_n)}
\]
commute for any $n\in \mathbb{N}$, $A_i\in \textrm{ob}\mathcal{A}$ and $\alpha_i \in \mathcal{A}(A_i,A_{i+1})$.
\end{dfn}
The category of $A_\infty$ modules over an $A_\infty$ category $\mathcal{A}$ will be denoted as $\textrm{Mod}_\infty^\textrm{strict}\mathcal{A}$. This notation agrees with the notation in \cite{bib_L}.

The morphism $G:M \to N$ of modules is an \emph{$A_\infty$-quasi-isomorphism} if $G_A$ is a quasi-isomorphism of complexes for any $A\in \textrm{ob}\mathcal{A}$.

The \emph{derived category} $D_\infty(\mathcal{A})$ of an $A_\infty$ category $\mathcal{A}$ is defined as a localization of the category $\textrm{Mod}_\infty^\textrm{strict}\mathcal{A}$ with respect to the class of $A_\infty$-quasi-isomorphisms. For $A_\infty$-quasi-isomorphic $A_\infty$ categories the derived categories are equivalent.

\begin{rem}[see \cite{bib_L}]\label{rem_DGderived}
For a DG category $\mathcal{C}$ the derived categories $D(\mathcal{C})$ and $D_\infty(\mathcal{C})$ are equivalent. It follows that for any augmented $A_\infty$ category $\mathcal{A}$ the derived category $D_\infty(\mathcal{A})$ is equivalent to the derived category of the universal algebra $D(U(\mathcal{A}))$.
\end{rem}

\begin{thm}\label{thm_finite}
Let $\mathcal{C}$ be a DG category with finitely many objects. Assume that $H(\mathcal{C})$ is an ordered and finite graded category such that $H(\mathcal{C})(C,C) = \mathbb{C}$ for any $C\in \mathcal{C}$. Then there exists an ordered and finite DG category $\widetilde{\mathcal{C}}$ such that $D(\mathcal{C})$ is equivalent to $D(\widetilde{\mathcal{C}})$.
\end{thm}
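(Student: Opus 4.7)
The plan is to pass from $\mathcal{C}$ to a well-chosen minimal $A_\infty$ model, rigidify it to a strictly unital augmented form, and then take its universal DG category, tracking orderedness and finiteness at each step.

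First I would apply Kadeishvili's theorem (Theorem \ref{thm_minimal model}) to obtain on $H(\mathcal{C})$ a minimal $A_\infty$-structure together with an $A_\infty$-quasi-isomorphism $\mathcal{C}\to H(\mathcal{C})$. By hypothesis $H(\mathcal{C})$ is ordered, finite, and satisfies $H(\mathcal{C})(C,C)=\mathbb{C}$. Since $\mathcal{C}$ is strictly unital as a DG category, its minimal model is homologically unital, so by Proposition \ref{thm:strict} and Remark \ref{rem_strict} there is a minimal strictly unital $A_\infty$ category $\mathcal{A}$ and an $A_\infty$-quasi-isomorphism $H(\mathcal{C})\to\mathcal{A}$ whose first component is the identity. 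Thus $\mathcal{A}$ has the same underlying graded morphism spaces as $H(\mathcal{C})$, hence is still ordered and finite, and $\mathcal{A}(A,A)=\mathbb{C}\cdot 1_A$ for every object $A$.

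Because of the strict unit, the obvious map $\epsilon\colon\mathbb{C}\{\mathrm{ob}(\mathcal{A})\}\to\mathcal{A}$ is an augmentation, and $\bar{\mathcal{A}}(A,A)=0$ for every $A$. I would then define $\widetilde{\mathcal{C}}:=U(\mathcal{A})=\Omega(B_\infty(\mathcal{A}))$. Composing the $A_\infty$-quasi-isomorphisms
\[
\mathcal{C}\longrightarrow H(\mathcal{C})\longrightarrow\mathcal{A}\longrightarrow U(\mathcal{A})
\]
and invoking Remark \ref{rem_DGderived}, which identifies $D_\infty$ with $D$ on DG categories and is invariant under $A_\infty$-quasi-isomorphism, yields the required equivalence $D(\mathcal{C})\cong D(\widetilde{\mathcal{C}})$.

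It remains to check that $\widetilde{\mathcal{C}}$ is ordered and finite. Since $\mathcal{A}$ is ordered, finite and $\bar{\mathcal{A}}(A,A)=0$, Remark \ref{rem_barfinite} shows $B_\infty(\mathcal{A})$ inherits these three properties, and then Remark \ref{rem_cobarfinite} transports them through $\Omega$ to $\widetilde{\mathcal{C}}$. The crucial reason finiteness survives the tensor-cocategory and tensor-category constructions is that the vanishing $\bar{\mathcal{A}}(A,A)=0$ combined with the partial order forces any nonzero summand of the form $S\bar{\mathcal{A}}(A_{n-1},A_n)\otimes\cdots\otimes S\bar{\mathcal{A}}(A_0,A_1)$ to be indexed by a strictly increasing chain $A_0\prec A_1\prec\cdots\prec A_n$ in a finite poset, so only finitely many summands contribute to each morphism space. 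The main obstacle in the argument is the strictification step of Proposition \ref{thm:strict}: without it one could not guarantee the augmentation and the vanishing $\bar{\mathcal{A}}(A,A)=0$ on which finiteness of $B_\infty$ and $\Omega$ rest. Once strictification is used, the rest of the proof is the formal composition of the bar-cobar adjunction with Kadeishvili's minimal model theorem.
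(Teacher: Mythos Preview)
Your proof is correct and follows essentially the same route as the paper: pass to the minimal model via Kadeishvili's theorem, strictify the unit using Proposition~\ref{thm:strict}, observe that the augmented minimal model has $\bar{\mathcal{A}}(A,A)=0$, and then apply the universal DG construction $U=\Omega\circ B_\infty$, invoking Remarks~\ref{rem_barfinite} and~\ref{rem_cobarfinite} for the finiteness and orderedness of the result. The only difference is that you spell out in more detail why the bar and cobar constructions stay finite (the strictly increasing chain argument) and isolate the strictification step as the key point, whereas the paper's proof is terser and leaves these as implicit consequences of the cited remarks.
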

\begin{proof}
Theorem \ref{thm_minimal model} guarantees existence of a minimal model of $\mathcal{C}$ -- an $A_\infty$ category $H(\mathcal{C})$. By remark \ref{rem:unit} we can assume that $H(\mathcal{C})$ is strictly unital. As $H(\mathcal{C})(C,C) = \mathbb{C}$ for any $C$ the category $H(C)$ is an augmented oriented $A_\infty$ category.  We have  
$$
\mathcal{D} = D(\mathcal{C}_\sigma) = D_\infty(H(\mathcal{C}_\sigma) = D(\mathcal{U}(H(\mathcal{C}_\sigma)).   
$$
Remarks \ref{rem_barfinite} and \ref{rem_cobarfinite} show that the category $\mathcal{U}(H(\mathcal{C}_\sigma))$ is ordered and finite.
\end{proof}

\begin{cor}\label{cor_finite}
Let $X$ be a smooth projective variety and let $\sigma = \langle \mathcal{E}_1, \ldots, \mathcal{E}_n \rangle$ be a full exceptional collection on $X$. There exists an oriented, finite DG category $\widetilde{\mathcal{C}_\sigma}$ such that $D^b(X)$ is equivalent to $D^b(\widetilde{\mathcal{C}_\sigma})$.
\end{cor}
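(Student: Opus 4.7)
The plan is to deduce this corollary as a direct application of Theorem \ref{thm_finite} to the DG category $\mathcal{C}_\sigma$ associated to $\sigma$ by Theorem \ref{thm:generating}, so the task reduces to verifying that $\mathcal{C}_\sigma$ satisfies all the hypotheses of Theorem \ref{thm_finite} and then tracking the chain of equivalences back to $D^b(X)$.

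First, I would note that $\mathcal{C}_\sigma$ is by construction a full DG subcategory of the enhancement $\underline{\mathrm{Kom}}^b(I)$ of $D^b(X)$ on the finitely many objects $\mathcal{E}_1,\ldots,\mathcal{E}_n$, so its object set is finite. Next I need to check that $H(\mathcal{C}_\sigma)$ is ordered, finite, and that $H(\mathcal{C}_\sigma)(\mathcal{E}_i,\mathcal{E}_i)=\mathbb{C}$. The identification $H^k(\mathcal{C}_\sigma(\mathcal{E}_i,\mathcal{E}_j)) = D^b(X)(\mathcal{E}_i,\mathcal{E}_j[k])$ recalled in the paragraph following Theorem \ref{thm:generating} does all three: the exceptional-collection vanishing $D^b(X)(\mathcal{E}_i,\mathcal{E}_j[k])=0$ for $i>j$ yields the partial order on objects, the exceptionality of each $\mathcal{E}_i$ gives $H(\mathcal{C}_\sigma)(\mathcal{E}_i,\mathcal{E}_i)=\mathbb{C}$, and the fact that $X$ is smooth projective and the $\mathcal{E}_i$'s are coherent sheaves ensures each $\mathrm{Ext}^k(\mathcal{E}_i,\mathcal{E}_j)$ is finite-dimensional and nonzero for only finitely many $k$, so every morphism space in $H(\mathcal{C}_\sigma)$ is finite-dimensional.

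With the hypotheses of Theorem \ref{thm_finite} verified, that theorem produces an ordered and finite DG category $\widetilde{\mathcal{C}_\sigma}$ together with an equivalence $D(\mathcal{C}_\sigma)\simeq D(\widetilde{\mathcal{C}_\sigma})$. To finish I would pass to compact objects on both sides: the Karoubianness of $D^b(X)$ noted at the start of Section~2 gives $\mathcal{C}_\sigma^{\mathrm{tr}} = D^b(\mathcal{C}_\sigma)$ and hence $D^b(X)\simeq D^b(\mathcal{C}_\sigma)$, and the equivalence produced by Theorem \ref{thm_finite} restricts to an equivalence $D^b(\mathcal{C}_\sigma)\simeq D^b(\widetilde{\mathcal{C}_\sigma})$ because it is obtained through an $A_\infty$-quasi-isomorphism composed with the universal-DG-category quasi-isomorphism, both of which preserve the subcategory of compact objects.

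There is no genuine obstacle here — everything reduces to bookkeeping — but if anything is delicate it is confirming that the equivalence $D(\mathcal{C}_\sigma)\simeq D(\widetilde{\mathcal{C}_\sigma})$ restricts properly to compact objects. For that I would invoke Remark \ref{rem_DGderived}, which identifies $D_\infty(\mathcal{A})$ with $D(U(\mathcal{A}))$, together with the fact that quasi-isomorphisms of DG categories and $A_\infty$-quasi-isomorphisms induce equivalences of derived categories that send the Yoneda generators to Yoneda generators, hence preserve compactness. This legitimises the passage $D^b(X)\simeq D^b(\mathcal{C}_\sigma)\simeq D^b(\widetilde{\mathcal{C}_\sigma})$ and completes the argument.
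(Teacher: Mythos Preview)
Your proposal is correct and follows essentially the same approach as the paper: apply Theorem~\ref{thm:generating} to obtain $\mathcal{C}_\sigma$, verify the hypotheses of Theorem~\ref{thm_finite} using exceptionality and Ext-finiteness of $\mathrm{Coh}(X)$, and then pass from the equivalence $D(\mathcal{C}_\sigma)\simeq D(\widetilde{\mathcal{C}_\sigma})$ to the bounded version by restricting to compact objects. Your write-up is in fact more careful than the paper's in spelling out why each hypothesis of Theorem~\ref{thm_finite} holds and why the restriction to compact objects is legitimate.
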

\begin{proof}
By theorem \ref{thm:generating} there exists a DG category $\mathcal{C}_\sigma$ such that \mbox{$D^b(X) = D^b(\mathcal{C}_\sigma)$}. As sheaves $\mathcal{E}_1, \ldots, \mathcal{E}_n$ are exceptional and the category $\textrm{Coh}(X)$ is Ext-finite, the category $\mathcal{C}_\sigma$ satisfies conditions of theorem \ref{thm_finite} and there exists a DG category $\widetilde{\mathcal{C}_\sigma}$ such that categories $D(\mathcal{C}_\sigma)$ and $D(\widetilde{\mathcal{C}_\sigma)}$ are equivalent. As $D^b(\mathcal{C}_\sigma)\subset D(\mathcal{C}_\sigma)$ is the subcategory of compact objects it follows that $D^b (\mathcal{C}_\sigma) = D^b( \widetilde{ \mathcal{C}_\sigma)}$.
\end{proof}

The category $\widetilde{\mathcal{C}_\sigma}$ is $A_\infty$-quasi-isomorphic to the category $\mathcal{C}_\sigma$ and hence $H^k(\widetilde{\mathcal{C}_\sigma}(\mathcal{E}_i, \mathcal{E}_j)) = \textrm{Ext}^k_X(\mathcal{E}_i, \mathcal{E}_j)$.

To simplify the notation we will denote by $\mathcal{C}_\sigma$ an ordered finite DG category associated to a full exceptional collection $\sigma$.

\subsection*{Action of the braid group}

We have seen that the braid group acts on the set of exceptional collection. This action lifts to associated DG categories.

Twisted complexes provide a description of the categories $\mathcal{C}_{L_i\sigma}$ and $\mathcal{C}_{R_i\sigma}$ by means of $\mathcal{C}_\sigma$. To see it we need to define a tensor product of a twisted complex with a complex of vector spaces. 

Let $\mathcal{C}$ be a finite DG category, \mbox{$C\in \mathcal{C}^\textrm{pre-tr}$} be a twisted complex and let $V^\bullet$ be a finite dimensional complex of vector spaces with the differential \mbox{$\partial^i:V^i\to V^{i+1}$.
$C\otimes V$} is defined as $(\bigoplus_{\{i|V^i \neq 0\}} C[-i]^{\oplus \textrm{dim}V^i},q_{i,j}) \in (\mathcal{C}^\textrm{pre-tr})^\textrm{pre-tr}$. The morphisms $q_{i,i+1}$ are induced by the differential $\partial^i$ tensored with the identity on $C$ and $q_{i,j}=0$ for $j\neq i+1$.

Now, let $C,D\in \mathcal{C}^\textrm{pre-tr}$ be twisted complexes. There exist closed morphisms of degree 0 
\begin{align*}
\phi:& C\otimes \mathcal{H}om_{\mathcal{C}^\textrm{pre-tr}}(C,D) \to D,\\
\psi:& C \to \mathcal{H}om_{\mathcal{C}^\textrm{pre-tr}}(C,D)^* \otimes D.
\end{align*}
The morphism $\phi_{i,0}: C[-i]^{\oplus \textrm{dim Hom}^i(C,D)} \to D$ is given by morphisms of degree $i$ between $C$ and $D$; $\psi$ is defined analogously. 
Now we define new twisted complexes over $\mathcal{C}$
\begin{align*}
L_CD & = \textrm{Tot}(C(\phi)[-1]),\\
R_DC &= \textrm{Tot}(C(\psi)),
\end{align*}
where $C(\phi)$ denotes the cone of $\phi$.

Let $\sigma = \langle \mathcal{E}_1,\ldots,\mathcal{E}_n\rangle$ be a full exceptional collection on a smooth projective variety $X$ and let $\mathcal{C}_\sigma$  be the category described in the theorem \ref{thm_finite}. Let denote by $E_1,\ldots, E_n$ objects of $\mathcal{C}_\sigma$. We define two full subcategories of $\mathcal{C}_\sigma^\textrm{pre-tr}$; $\mathcal{C}_\sigma^{L_i}$ with objects $E_1, \ldots, E_i, L_{E_i}E_{i+1},E_{i+2},\ldots E_n$ and $\mathcal{C}_\sigma^{R_i}$ with $E_1,\ldots, E_{i-1}, R_{E_{i+1}}E_i, E_{i+1},\ldots, E_n$.

\begin{prop}
Let $\sigma = \langle \mathcal{E}_1, \ldots, \mathcal{E}_n \rangle$ be a full exceptional collection on $X$ and let $\mathcal{C}_\sigma$ be a finite DG category with objects $E_1, \ldots, E_n$ with $H^k\mathcal{C}_\sigma(E_i, E_j) = \textrm{Ext}^k_X(\mathcal{E}_i, \mathcal{E}_j)$ and such that $D^b(X)$ is equivalent to $D^b(\mathcal{C}_\sigma)$. Then the categories $\mathcal{C}_\sigma^{L_i}$ and $\mathcal{C}_\sigma^{R_i}$ satisfy analogous conditions for the collections $L_i\sigma$ and $R_i \sigma$ respectively. 
\end{prop}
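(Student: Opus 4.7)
The plan is to apply Theorem \ref{thm:generating} a second time, now with the pretriangulated category $\widetilde{\mathcal{C}} = \mathcal{C}_\sigma^{\textrm{pre-tr}}$, whose $H^0$ satisfies $\mathcal{C}_\sigma^{\textrm{tr}} \simeq D^b(X)$ by hypothesis. The category $\mathcal{C}_\sigma^{L_i}$ is, by construction, a full DG subcategory of $\mathcal{C}_\sigma^{\textrm{pre-tr}}$ on finitely many objects. Theorem \ref{thm:generating} will then identify $(\mathcal{C}_\sigma^{L_i})^{\textrm{tr}}$ with the smallest triangulated subcategory of $D^b(X)$ generated by the images of those objects, and Karoubian completion will upgrade this to $D^b(\mathcal{C}_\sigma^{L_i}) \simeq D^b(X)$ once I verify that the generating objects correspond to $L_i\sigma$.

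The crux of the proof is to match the twisted complex $L_{E_i}E_{i+1} = \textrm{Tot}(C(\phi)[-1])$, built from the evaluation map $\phi : E_i \otimes \mathcal{H}om_{\mathcal{C}_\sigma^{\textrm{pre-tr}}}(E_i,E_{i+1}) \to E_{i+1}$, with the sheaf $L_{\mathcal{E}_i}\mathcal{E}_{i+1}$ defined in $D^b(X)$ by the triangle $L_{\mathcal{E}_i}\mathcal{E}_{i+1} \to \mathcal{H}om_{D^b(X)}(\mathcal{E}_i,\mathcal{E}_{i+1})\otimes \mathcal{E}_i \to \mathcal{E}_{i+1}$. The hypothesis $H^k\mathcal{C}_\sigma(E_i,E_j) = \textrm{Ext}^k_X(\mathcal{E}_i,\mathcal{E}_j)$ makes the complex $\mathcal{H}om_{\mathcal{C}_\sigma^{\textrm{pre-tr}}}(E_i,E_{i+1})$ a representative, up to quasi-isomorphism, of $\mathcal{H}om_{D^b(X)}(\mathcal{E}_i,\mathcal{E}_{i+1})$, and $\phi$ becomes the canonical evaluation after passing to cohomology. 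Since the cone of a closed degree zero morphism in a pretriangulated DG category descends to the triangulated cone in $H^0$, the twisted complex $L_{E_i}E_{i+1}$ is sent to $L_{\mathcal{E}_i}\mathcal{E}_{i+1}$ under the equivalence $\mathcal{C}_\sigma^{\textrm{tr}} \simeq D^b(X)$. Consequently the list of objects of $\mathcal{C}_\sigma^{L_i}$ corresponds to the full exceptional collection $L_i\sigma$, which generates $D^b(X)$ as a triangulated subcategory.

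Once this translation is established, the Hom--Ext identification follows immediately: as $\mathcal{C}_\sigma^{L_i}$ is a \emph{full} DG subcategory of the enhancement $\mathcal{C}_\sigma^{\textrm{pre-tr}}$, we get $H^k(\mathcal{C}_\sigma^{L_i}(F,G)) = \textrm{Hom}_{D^b(X)}(F,G[k]) = \textrm{Ext}^k_X(\mathcal{F},\mathcal{G})$ for the sheaves $\mathcal{F}, \mathcal{G}$ corresponding to $F, G$ in $L_i\sigma$. Finiteness of $\mathcal{C}_\sigma^{L_i}$ is inherited from finiteness of $\mathcal{C}_\sigma$: morphisms between twisted complexes are matrices of morphisms in $\mathcal{C}_\sigma$, and the twisted complex $L_{E_i}E_{i+1}$ involves only finitely many summands because $\mathcal{H}om_{\mathcal{C}_\sigma}(E_i,E_{i+1})$ is finite-dimensional by Ext-finiteness of $\textrm{Coh}(X)$. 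The argument for $\mathcal{C}_\sigma^{R_i}$ is entirely parallel, replacing $\phi$ by $\psi$ and using the companion triangle defining $R_{\mathcal{E}_{i+1}}\mathcal{E}_i$.

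The principal obstacle is the middle step: carefully identifying $\phi$ (respectively $\psi$) with the evaluation (respectively coevaluation) map in $D^b(X)$, and checking that the twisted-complex cone $C(\phi)$ represents the triangulated cone of this evaluation under the equivalence $\mathcal{C}_\sigma^{\textrm{tr}} \simeq D^b(X)$. This amounts to untangling the definition of the tensor product of a twisted complex with the finite-dimensional complex $\mathcal{H}om_{\mathcal{C}_\sigma^{\textrm{pre-tr}}}(E_i,E_{i+1})$ and tracing it through the Yoneda embedding. Once this bookkeeping is done, everything else is a routine appeal to Theorem \ref{thm:generating} together with the fact that Karoubian completion of a subcategory of a Karoubian triangulated category containing a full exceptional collection recovers the whole category.
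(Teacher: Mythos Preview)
Your proposal is correct and follows essentially the same route as the paper: use the equivalence $\mathcal{C}_\sigma^{\textrm{tr}}\simeq D^b(X)$ to identify the twisted-complex mutation $L_{E_i}E_{i+1}$ with the derived-category mutation $L_{\mathcal{E}_i}\mathcal{E}_{i+1}$, then invoke Theorem~\ref{thm:generating} with $\widetilde{\mathcal{C}}=\mathcal{C}_\sigma^{\textrm{pre-tr}}$ and observe finiteness by construction. The paper's proof is considerably terser---it asserts the correspondence of mutations in one sentence and omits the Karoubian-completion and Hom--Ext bookkeeping you spell out---but the underlying argument is the same.
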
 

\begin{proof}
As the category $\mathcal{C}_\sigma$ is finite, mutations of twisted complexes over $\mathcal{C}_\sigma$ are well defined. Furthermore, the category $D^b(X)$ is equivalent to $\mathcal{C}_\sigma^\textrm{tr}$. Under this equivalence $L_{\mathcal{E}_i}\mathcal{E}_{i+1}$ corresponds to $L_{E_i}E_{i+1}$. Hence the category $\mathcal{C}_\sigma^{L_i}$ is the DG category described by theorem \ref{thm:generating}. By construction it is also finite.
\end{proof}

\section{Calculating the category $\mathcal{C}_\sigma$}
First, let us define universal extensions after \cite{bib_HP}.

\subsection*{Universal extensions}
Let $E$, $F$ be objects of a $\mathbb{C}$-linear abelian category. Note that $\textrm{End}(\textrm{Ext}^1(E,F)) = \textrm{Ext}^1(E, F \otimes \textrm{Ext}^1(E,F)^*)$. As $\textrm{id}\in \textrm{End}(\textrm{Ext}^1(E,F))$ there exists a distinguished element $\widetilde{\textrm{id}}$ of  \mbox{$\textrm{Ext}^1(E, F \otimes \textrm{Ext}^1(E,F)^*)$.}

$\bar{E}$ -- the \emph{universal extension of $E$ by $F$} is defined as the extension of $E$ by $F \otimes \textrm{Ext}^1(E,F)^*$ corresponding to $\widetilde{\textrm{id}} \in \textrm{Ext}^1(E, F \otimes \textrm{Ext}^1(E,F)^*)$. $\bar{E}$ fits into the short exact sequence:

\begin{equation}
\xymatrix{0 \ar[r] & F\otimes \textrm{Ext}^1(E,F)^* \ar[r] & \bar{E} \ar[r] & E \ar[r] &0.}
\end{equation}

This short exact sequence gives the long exact sequence 
\[
\xymatrix{0\ar[r] & \textrm{Hom}(E,F) \ar[r] & \textrm{Hom}(\bar{E},F) \ar[r] & \textrm{Hom}(F,F) \otimes \textrm{Ext}^1(E,F) \ar[r] & \dots}.
\]
It shows that Ext$^1(\bar{E},F) = 0$ and the groups Ext$^n(\bar{E},F)$ and Ext$^n(E,F)$ are isomorphic for $n>1$ if $\textrm{Ext}^n(F,F) = 0$ for $n>0$.

Moreover, if Ext$^n(F,E) = 0$ for $n\geq 0$, Hom$(F,F) = \mathbb{C}$ and Ext$^q(F,F)=0$ for $q>0$ then the long exact sequence
\[
\xymatrix{0\ar[r] & \textrm{Hom}(F,F) \otimes \textrm{Ext}^1(E,F)^* \ar[r] & \textrm{Hom}(F,\bar{E}) \ar[r] & \textrm{Hom}(F,E) \ar[r] &\ldots }
\] 
shows that $\textrm{Hom}(F,\bar{E}) = \textrm{Ext}^1(E,F)^*$.

Thus, if $F$ is exceptional and Ext$^n(F,E)=0$ for all $n$, the objects $\bar{E}$, $F$ and morphisms between them determine $E$ -- as a cone of the canonical morphism 
\[
\xymatrix{ F\otimes \mathcal{H}om(F,\bar{E}) \ar[r]^(.7){\textrm{can}} & \bar{E} \ar[r] & E.}
\]

Assume that $(E,F)$ is an exceptional pair in an enhanced triangulated category with the enhancement $\widetilde{\mathcal{C}}$ and denote by $\mathcal{C}$ the DG subcategory of $\widetilde{\mathcal{C}}$ with objects $\bar{E}$ and $F$. Then the cone of the canonical morphism \mbox{$F\otimes \mathcal{H}om(F,\bar{E}) \rightarrow \bar{F}$} in $\mathcal{C}^\textrm{pre-tr}$ corresponds to $E$. Hence, as in the case of mutations, knowing the DG subcategory of $\widetilde{\mathcal{C}}$ with objects $\bar{E}$ and $F$ we can calculate the DG subcategory with objects $E$ and $F$.    

\begin{thm}\label{thm:universal}
Let $\sigma = \langle \mathcal{E}_1, \ldots, \mathcal{E}_n \rangle$ be a full exceptional collection on a smooth projective variety $X$ such that $\textrm{Ext}^k(\mathcal{E}_i, \mathcal{E}_j) = 0$ for $k>1$ and any $i$, $j$. 
The DG category $\mathcal{C}_\sigma$ can be calculated by means of universal extensions.
\end{thm}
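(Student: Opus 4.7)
The plan is to reduce to the case of a strong exceptional collection by iterating universal extensions, and then to transfer the resulting DG structure back to $\sigma$ via twisted complexes and cones.

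Following the Hille--Perling idea, I would introduce the finite invariant
$$
N(\sigma) = \sum_{i<j} \dim_{\mathbb{C}} \textrm{Ext}^1(\mathcal{E}_i, \mathcal{E}_j)
$$
and argue, by induction on $N(\sigma)$, that one may replace $\sigma$ by a full exceptional collection with strictly smaller $N$ while preserving the hypothesis $\textrm{Ext}^k=0$ for $k>1$. Concretely, pick a pair $(i,j)$ with $i<j$ and $\textrm{Ext}^1(\mathcal{E}_i,\mathcal{E}_j)\neq 0$ (a prudent choice, e.g.\ one maximising $j-i$, makes the bookkeeping cleanest) and form the universal extension $\bar{\mathcal{E}_i}$ of $\mathcal{E}_i$ by $\mathcal{E}_j$. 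Feeding the defining short exact sequence into the long exact sequences for $\textrm{Hom}(-,\mathcal{E}_\ell)$ and $\textrm{Hom}(\mathcal{E}_\ell,-)$, and using the hypothesis $\textrm{Ext}^k=0$ for $k>1$ together with the exceptional vanishing $\textrm{Hom}(\mathcal{E}_j,\mathcal{E}_i[k])=0$ (since $j>i$), one checks that $\bar{\mathcal{E}_i}$ is exceptional, that $\langle\mathcal{E}_1,\ldots,\bar{\mathcal{E}_i},\ldots,\mathcal{E}_n\rangle$ is still a full exceptional collection with $\textrm{Ext}^k=0$ for $k>1$, and that $N$ strictly decreases (the key ingredient is $\textrm{Ext}^1(\bar{\mathcal{E}_i},\mathcal{E}_j)=0$, noted before the theorem). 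After finitely many iterations one arrives at a full strong exceptional collection $\sigma'$.

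For $\sigma'$ the cohomology $H^k(\mathcal{C}_{\sigma'}(\mathcal{E}'_i,\mathcal{E}'_j))=\textrm{Ext}^k_X(\mathcal{E}'_i,\mathcal{E}'_j)$ vanishes for $k\neq 0$, so by the remark in Section~2 the DG category $\mathcal{C}_{\sigma'}$ is quasi-isomorphic to the ordinary $\mathbb{C}$-linear category $H^0(\mathcal{C}_{\sigma'})$, whose morphism spaces are simply $\textrm{Hom}_X(\mathcal{E}'_i,\mathcal{E}'_j)$ with the usual composition; this category is entirely explicit. To move back from $\sigma'$ to $\sigma$, I would use the cone description recalled just before the theorem: whenever $\bar{E}$ is the universal extension of $E$ by $F$ in an exceptional pair with $\textrm{Ext}^n(F,E)=0$ for all $n$, the object $E$ is realised as the cone, in the pre-triangulated hull, of the canonical morphism $F\otimes \mathcal{H}om(F,\bar{E})\to\bar{E}$, and the DG subcategory on $\{E,F\}$ is thus determined by the DG subcategory on $\{\bar{E},F\}$. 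Undoing the universal extensions in reverse order produces a DG subcategory of the iterated pre-triangulated hull of $\mathcal{C}_{\sigma'}$ whose objects are twisted complexes representing the $\mathcal{E}_i$; applying the convolution functor $\textrm{Tot}$ lands this inside $\mathcal{C}_{\sigma'}^{\textrm{pre-tr}}$, and Theorem~\ref{thm:generating} applied inside the enhancement $\underline{\textrm{Kom}}^b(I)$ identifies the resulting DG subcategory with $\mathcal{C}_\sigma$ up to quasi-isomorphism.

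The main obstacle is the inductive Step~1: checking that, at every stage, the universal extension procedure preserves the standing hypotheses (full exceptionality, and $\textrm{Ext}^k=0$ for $k>1$) and strictly decreases $N$. The Ext-vanishing assumption across all pairs is essential here, since the long exact sequences that control $\textrm{Ext}^k(\bar{\mathcal{E}_i},\mathcal{E}_\ell)$ and $\textrm{Ext}^k(\mathcal{E}_\ell,\bar{\mathcal{E}_i})$ only give the desired vanishings when higher Ext's between the original sheaves already vanish. A secondary subtlety in Step~3 is verifying that the twisted complex built from $\bar{\mathcal{E}_i}$ and $\mathcal{E}_j$ is really sent, under the equivalence $\mathcal{C}_{\sigma'}^{\textrm{tr}}\simeq D^b(X)$, to the sheaf $\mathcal{E}_i$; this is where the universal property of $\bar{\mathcal{E}_i}$ is used and what guarantees that the reconstructed DG subcategory has the correct cohomology $\textrm{Ext}^\bullet_X(\mathcal{E}_i,\mathcal{E}_j)$.
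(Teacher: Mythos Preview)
Your inductive framework has a genuine gap: the claim that $\langle\mathcal{E}_1,\ldots,\bar{\mathcal{E}_i},\ldots,\mathcal{E}_n\rangle$ is again an exceptional collection is false. The discussion preceding the theorem computes $\textrm{Hom}(F,\bar{E})=\textrm{Ext}^1(E,F)^*$, so with $E=\mathcal{E}_i$ and $F=\mathcal{E}_j$ you get $\textrm{Hom}(\mathcal{E}_j,\bar{\mathcal{E}_i})\neq 0$ for $j>i$, violating the semi-orthogonality. Worse, $\bar{\mathcal{E}_i}$ is in general not even exceptional: if $\textrm{Hom}(\mathcal{E}_i,\mathcal{E}_j)\neq 0$ the composite $\bar{\mathcal{E}_i}\twoheadrightarrow\mathcal{E}_i\to\mathcal{E}_j\hookrightarrow\bar{\mathcal{E}_i}$ is a nonzero nilpotent endomorphism. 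The paper's own example makes this explicit: the collection $\langle\mathcal{O}_X,V,\mathcal{O}_X(E_1+E_2),\mathcal{O}_X(H),\mathcal{O}_X(2H)\rangle$ is declared ``no longer exceptional'', and the quiver drawn for it has arrows in both directions between $V$ and $\mathcal{O}_X(E_1+E_2)$. Your induction on $N(\sigma)$ is also fragile: for general $\ell$ the long exact sequence gives a contribution $\textrm{Ext}^1(\mathcal{E}_j,\mathcal{E}_\ell)\otimes\textrm{Ext}^1(\mathcal{E}_i,\mathcal{E}_j)$ to $\textrm{Ext}^1(\bar{\mathcal{E}_i},\mathcal{E}_\ell)$, so $N$ can go up unless the extensions are performed in a specific order.

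The paper avoids all of this by not trying to stay inside the world of exceptional collections. It uses the fixed Hille--Perling recipe $\mathcal{E}_i(j)=$ universal extension of $\mathcal{E}_i(j-1)$ by $\mathcal{E}_j$, sweeping $j$ from $i+1$ up to $n$, and shows that $\bigoplus_i\mathcal{E}_i(n)$ is a tilting object. The point is only that the full DG subcategory on the $\mathcal{E}_i(n)$ has cohomology concentrated in degree zero and hence is quasi-isomorphic to its ordinary $\textrm{Hom}$-category; exceptionality is neither needed nor true. One then recovers each original $\mathcal{E}_i$ as an iterated cone of canonical maps inside $\mathcal{C}^{\textrm{pre-tr}}$, exactly as in your Step~3. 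So your back-transfer step is right; what needs to change is the target of the reduction (tilting object rather than strong exceptional collection) and the order in which the universal extensions are performed.
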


\begin{proof}
Following \cite{bib_HP} we define $\mathcal{E}_i(1) = \mathcal{E}_i(2) =\ldots = \mathcal{E}_i(i) = \mathcal{E}_i$ and for $j>i$ we put $\mathcal{E}_i(j)$ to be the universal extension of $\mathcal{E}_i(j-1)$ by $\mathcal{E}_j$.
\[
\xymatrix{0 \ar[r] & \mathcal{E}_j\otimes \textrm{Ext}^1(\mathcal{E}_i(j-1), \mathcal{E}_j)^* \ar[r] & \mathcal{E}_i(j) \ar[r] & \mathcal{E}_i(j-1) \ar[r] &0.}
\]

Then $\mathcal{F} = \bigoplus_{i=1}^n \mathcal{E}_i(n)$ is a tilting object -- it generates $D^b(X)$ and Ext$^i(\mathcal{F}, \mathcal{F}) = 0$ for $i>0$. Moreover, as the $\mathcal{E}_i$'s are exceptional and there are no morphisms from $\mathcal{E}_j$ to $\mathcal{E}_i(j-1)$, the objects $\mathcal{E}_1(n),\ldots,\mathcal{E}_n(n)$ determine $\mathcal{E}_i$'s. Thus, as described above, the endomorphisms algebra of $\bigoplus_{i=1}^n \mathcal{E}_i(n)$ determines the DG structure of the collection $(\mathcal{E}_1,\ldots, \mathcal{E}_n)$.
\end{proof}

\section{Exceptional collections of line bundles on rational surfaces}

After \cite{bib_HP} we describe full exceptional collections of line bundles on rational surfaces.
\subsection*{Construction and mutations}
Let $X$ be a rational surface. Then $X$ is a blow up of $\mathbb{P}^2$ or of the Hirzebruch surface $\mathbb{F}_a$. There is a sequence of blow-ups
\[
\xymatrix{X=X_t \ar[r]^{\pi_t} & X_{t-1} \ar[r]^{\pi_{t-1}} & \ldots \ar[r]^{\pi_1} & X_0,}
\] 
where $\pi_i$ has an exceptional divisor $E_i$ and $X_0$ is either the projective plane $\mathbb{P}^2$ or the Hirzebruch surface $\mathbb{F}_a$.

Let $R_i$ denote preimages of the divisor $E_i$ to $X_k$ for $k\geq i$. Then $R_1,\ldots,R_t$ are mutually orthogonal of self intersection -1.

Full exceptional collections of line bundles on $X$ can be obtained by an augmentation from an exceptional collection on $X_0$. To describe the augmentation we identify a line bundle $\mathcal{L}$ on $X_i$ with its pull back via $\pi_j$'s and denote them by the same letter.

Let $\sigma=\langle\mathcal{L}_1,\ldots,\mathcal{L}_s\rangle$ be an exceptional collection on $X_i$. The augmentation of $\sigma$ is \mbox{$\sigma '=\langle \mathcal{L}_1(R_{i+1})\ldots,\mathcal{L}_{k-1}(R_{i+1}),\mathcal{L}_k, \mathcal{L}_k(R_{i+1}), \mathcal{L}_{k+1}, \ldots \mathcal{L}_s \rangle$} -- an exceptional collection on $X_{i+1}$. 

Hille and Perling in \cite{bib_HP} proved that all the above described collections are full and have no nontrivial Ext$^2$ groups between elements. Moreover, mutations allow to present every one of them in the following form.
 
\begin{prop}
Any exceptional collection of line bundles on $X$ obtained via augmentation can be mutated to $\langle \mathcal{O}, \mathcal{O}(R_t), \ldots, \mathcal{O}(R_1),\mathcal{L}_1, \ldots, \mathcal{L}_r \rangle $, where $\langle \mathcal{O}, \mathcal{L}_1, \ldots, \mathcal{L}_r \rangle$  is an exceptional collection on $X_0$.
\end{prop}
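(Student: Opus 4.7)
The plan is to proceed by induction on the number $t$ of blow-ups in the tower $X = X_t \to X_{t-1} \to \cdots \to X_0$. For $t = 0$ the statement is immediate: there are no divisors $R_j$ to insert, so the claim reduces to the assertion (proved in \cite{bib_HP}) that any full exceptional collection of line bundles on $\mathbb{P}^2$ or $\mathbb{F}_a$ can be mutated to one beginning with $\mathcal{O}$.

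For the inductive step, write the augmented collection $\tau$ on $X_t$ as the augmentation, at some position $k$, of a collection $\sigma = \langle \mathcal{M}_1, \ldots, \mathcal{M}_s\rangle$ on $X_{t-1}$:
$$\tau = \langle \mathcal{M}_1(R_t), \ldots, \mathcal{M}_{k-1}(R_t), \mathcal{M}_k, \mathcal{M}_k(R_t), \mathcal{M}_{k+1}, \ldots, \mathcal{M}_s\rangle.$$
In the first phase I will mutate $\tau$, within collections of line bundles, to the augmentation of $\sigma$ at position $1$, namely
$$\tau' = \langle \mathcal{M}_1, \mathcal{M}_1(R_t), \mathcal{M}_2, \ldots, \mathcal{M}_s\rangle.$$
This is done by lowering $k$ one step at a time. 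The key computational input is the short exact sequence $0 \to \mathcal{M}_j \to \mathcal{M}_j(R_t) \to \mathcal{M}_j|_{R_t}(-1) \to 0$ together with the fact that $R_t$ has trivial intersection with every divisor pulled back from $X_{t-1}$; this lets one compute the $\textrm{Hom}$ and $\textrm{Ext}^1$ groups needed to exhibit an explicit braid transforming the triple $(\mathcal{M}_{k-1}(R_t), \mathcal{M}_k, \mathcal{M}_k(R_t))$ into $(\mathcal{M}_{k-1}, \mathcal{M}_{k-1}(R_t), \mathcal{M}_k)$.

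In the second phase the inductive hypothesis gives a braid taking $\sigma$ on $X_{t-1}$ to $\langle \mathcal{O}, \mathcal{O}(R_{t-1}), \ldots, \mathcal{O}(R_1), \mathcal{L}_1, \ldots, \mathcal{L}_r\rangle$. Each of these mutations lifts to a mutation of $\tau'$ on $X_t$: mutations not involving $\mathcal{M}_1$ lift verbatim, because $\textrm{Hom}$ between pulled-back line bundles is unchanged under $\pi_t$ by the projection formula, while mutations involving $\mathcal{M}_1$ are commuted across the augmenting pair $(\mathcal{M}_1, \mathcal{M}_1(R_t))$ using the same short exact sequence as in phase one. Tracking the pair through this lift, it becomes $(\mathcal{O}, \mathcal{O}(R_t))$, producing the claimed canonical form.

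The main obstacle will be the explicit first phase: one has to verify that each mutation lowering $k$ by one keeps all intermediate terms within line bundles on $X_t$, and really identifies the outcome with the augmentation at position $k-1$ of some mutation of $\sigma$. The crux is checking that a step which passes $\mathcal{M}_k(R_t)$ past $\mathcal{M}_{k-1}(R_t)$ preserves the augmented structure, i.e.\ produces a collection of the expected form. Once this is established, the phase-two lifting is essentially formal.
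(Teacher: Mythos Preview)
Your proof diverges from the paper's at a key point, and the divergence creates a real gap.

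The paper does \emph{not} try to stay within line bundles. Starting from the augmented collection at position $k$, it left-mutates the pair $(\mathcal{L}_k,\mathcal{L}_k(R_t))$ to obtain (up to shift) the torsion sheaf $\mathcal{O}_{R_t}(R_t)$ in front of $\mathcal{L}_k$, then uses the exact sequences $0\to\mathcal{L}_j\to\mathcal{L}_j(R_t)\to\mathcal{O}_{R_t}(R_t)\to 0$ to push $\mathcal{O}_{R_t}(R_t)$ all the way to position~$1$, turning each $\mathcal{L}_j(R_t)$ into $\mathcal{L}_j$ along the way. The residual collection in positions $2,\ldots,s+1$ is then literally $\pi_t^*\sigma$, and one iterates \emph{verbatim} on it: every subsequent mutation lives in positions $\geq 2$ and never interacts with the object sitting at position~$1$. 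At the very end one mutates the string $\langle\mathcal{O}_{R_t}(R_t),\ldots,\mathcal{O}_{R_1}(R_1),\mathcal{O}\rangle$ to $\langle\mathcal{O},\mathcal{O}(R_t),\ldots,\mathcal{O}(R_1)\rangle$.

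Your Phase~1 is essentially the first half of this computation, repackaged. Note, however, that the two-step braid taking the triple $(\mathcal{M}_{k-1}(R_t),\mathcal{M}_k,\mathcal{M}_k(R_t))$ to $(\mathcal{M}_{k-1},\mathcal{M}_{k-1}(R_t),\mathcal{M}_k)$ passes through $\mathcal{O}_{R_t}(R_t)$ as an intermediate object; there is no way to do it while ``keeping all intermediate terms within line bundles'' as you suggest.

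The genuine problem is Phase~2. You want to lift the inductive braid $\beta$ on $\sigma=\langle\mathcal{M}_1,\ldots,\mathcal{M}_s\rangle$ to a braid on $\tau'=\langle\mathcal{M}_1,\mathcal{M}_1(R_t),\mathcal{M}_2,\ldots,\mathcal{M}_s\rangle$, ``commuting mutations involving $\mathcal{M}_1$ across the augmenting pair''. But a generator $\sigma_1$ of $\beta$ acts on the triple $(\mathcal{M}_1,\mathcal{M}_1(R_t),\mathcal{M}_2)$, and for the pair in positions $1$--$2$ to remain of the form $(\mathcal{N},\mathcal{N}(R_t))$ after the move you would need the mutated object $\mathcal{N}$ to be a line bundle pulled back from $X_{t-1}$. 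There is no reason for this: $\beta$ itself (as produced by the inductive hypothesis) already passes through torsion sheaves $\mathcal{O}_{R_j}(R_j)$, for which ``$(-)(R_t)$'' has no meaning. Moreover $\mathrm{Ext}^1(\mathcal{O}_{R_t}(R_t),\mathcal{M}_2)\cong\mathbb{C}\neq 0$, so mutating $\mathcal{M}_2$ through the subcategory $\langle\mathcal{M}_1,\mathcal{M}_1(R_t)\rangle=\langle\mathcal{O}_{R_t}(R_t),\mathcal{M}_1\rangle$ does not simply reproduce the mutation through $\langle\mathcal{M}_1\rangle$. Your description does not address how to handle this, and I do not see a uniform recipe that does.

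The fix is exactly what the paper does: after your Phase~1, perform one further left mutation on $(\mathcal{M}_1,\mathcal{M}_1(R_t))$ to put $\mathcal{O}_{R_t}(R_t)$ at position~$1$. Then the collection in positions $2,\ldots,s+1$ is $\pi_t^*\sigma$, the inductive braid applies to those positions with no lifting needed, and a single final mutation of $(\mathcal{O}_{R_t}(R_t),\mathcal{O})$ puts $\mathcal{O}(R_t)$ in its place.
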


\begin{proof}
As $\mathcal{O}_X =(\pi_i\ldots \pi_t)^* (\mathcal{O}_{X_i})$ and the line bundle $\mathcal{O}_X(R_i)$ is a pullback of $\mathcal{O}_{X_i}(E_i)$, the projection formula gives
\begin{align*}
H^q(X,\mathcal{O}(R_i)) = H^q(X_i, \mathcal{O}(E_i)) & = \left\{\begin{array}{ll} \mathbb{C} & q=0,\\ 0 & q\neq 0, \end{array} \right.\\
H^q(X,\mathcal{O}(2R_i)) = H^q(X_i, \mathcal{O}(2E_i)) & = \left\{\begin{array}{ll} \mathbb{C} & q=0,1, \\ 0 & q\neq 0,1, \end{array} \right.\\
H^q(X,\mathcal{O}(-R_i)) = H^q(X_i, \mathcal{O}(-E_i))& = 0.
\end{align*}
Short exact sequences 
\[
\xymatrix{ 0 \ar[r]& \mathcal{O}(-R_i)\ar[r] & \mathcal{O} \ar[r] &  \mathcal{O}_{R_i}\ar[r] & 0,\\
0 \ar[r] & \mathcal{O}\ar[r] & \mathcal{O}(R_i) \ar[r] & \mathcal{O}_{R_i}(R_i) \ar[r] &  0}
\]
show that 
\begin{align*}
\textrm{Ext}^q(\mathcal{O}, \mathcal{O}_{R_i}) & = \left\{\begin{array}{ll}\mathbb{C} & q=0, \\ 0 & q \neq 0, \end{array} \right.\\
\textrm{Ext}^q(\mathcal{O}_{R_i}(R_i),\mathcal{O}) & =  \left\{\begin{array}{ll} \mathbb{C} & q=1, \\ 0 & q\neq 1. \end{array}  \right.
\end{align*}

The collection on $X$ obtained via augmentation is of the form $\langle \mathcal{L}_1(R_t),\ldots,\mathcal{L}_{i-1}(R_t),\mathcal{L}_i, \mathcal{L}_i(R_t), \mathcal{L}_{i+1}, \ldots \mathcal{L}_s \rangle$, where $\mathcal{L}_j$'s are pull backs of line bundles on $X_{t-1}$.  

Equality
$$
 \textrm{Hom}(\mathcal{L}_i, \mathcal{L}_i(R_t)) = \textrm{Hom}(\mathcal{O},\mathcal{O}(R_t)) = \mathbb{C}
$$
and the short exact sequence
$$
0\rightarrow \mathcal{L}_i \rightarrow \mathcal{L}_i(R_t) \rightarrow \mathcal{O}_{R_t}(R_t) \rightarrow 0
$$ 
show that this collection can be mutated to $\langle \mathcal{L}_1(R_t), \ldots, \mathcal{L}_{i-1}(R_t), \mathcal{O}_{R_t}(R_t), \mathcal{L}_i,$ $ \mathcal{L}_{i+1}, \ldots \mathcal{L}_s \rangle$.

Then 
$$
\textrm{Hom}(\mathcal{L}_k(R_t), \mathcal{O}_{R_t}(R_t)) = \textrm{Hom}(\mathcal{O}(R_t), \mathcal{O}_{R_t}(R_t)) = \textrm{Hom}(\mathcal{O}, \mathcal{O}_{R_t}) = \mathbb{C}
$$
and the exact sequences
$$
0\rightarrow \mathcal{L}_k \rightarrow \mathcal{L}_k(R_t) \rightarrow \mathcal{O}_{R_t}(R_t) \rightarrow 0
$$
provide further mutations to $\langle \mathcal{O}_{R_t}(R_t), \mathcal{L}_1, \ldots, \mathcal{L}_s \rangle$. 

The collection $\langle \mathcal{L}_1,\ldots, \mathcal{L}_s \rangle$ is a pull back of a collection on $X_{t-1}$ and it again has the form $\langle \mathcal{L}'_1 (R_{t-1}), \ldots, \mathcal{L}'_{k-1}(R_{t-1}), \mathcal{L}'_k, \mathcal{L}'_k(R_{t-1}), \mathcal{L}'_{k+1}, \ldots, \mathcal{L}'_{s-1} \rangle$ for some $k$. As before, it can be mutated to $\langle \mathcal{L}'_1(R_{t-1}), \ldots, \mathcal{L}'_{k-1}(R_{t-1}),$ $ \mathcal{O}_{R_{t-1}}(R_{t-1}),\mathcal{L}'_k, \ldots, \mathcal{L}'_{s-1}\rangle$ and then to $\langle \mathcal{O}_{R_{t-1}}(R_{t-1}),\mathcal{L}'_1, \ldots, \mathcal{L}'_{s-1} \rangle$.

Continuing, we can mutate the collection on $X$ to $\langle \mathcal{O}_{R_t}(R_t), \ldots, \mathcal{O}_{R_1}(R_1),$ $ \mathcal{O}, \mathcal{L}_1, \ldots, \mathcal{L}_r \rangle$. And this collection can be mutated to $\langle \mathcal{O}, \mathcal{O}(R_t), \ldots, \mathcal{O}(R_1),$ $\mathcal{L}_1, \ldots, \mathcal{L}_r \rangle $.
\end{proof}

\subsection*{Example}

Let $X$ be the blow up of $\mathbb{P}^2$ along the degree 2 subscheme supported at a point $x_0$. The Picard group of $X$ is generated by divisors $E_1$, $E_2$ and $H$ with intersection form given by
\begin{align*}
&E_1^2 = -2, \quad E_2^2 = -1, \quad H^2 =1, \\
&E_1E_2 = 1, \quad E_iH = 0.\\
\end{align*}
In the notation of the previous section we have
\begin{align*}
&R_1 = E_1 +E_2, \quad R_2 = E_2.
\end{align*}
The collection $\langle \mathcal{O}_X, \mathcal{O}_X(E_2), \mathcal{O}_X(E_1+E_2), \mathcal{O}_X(H), \mathcal{O}_X(2H) \rangle$ is full and exceptional with a quiver 
\[
\xymatrix{\mathcal{O}_X \ar[r]^{\alpha} \ar@/_2pc/[rrr]_{\eta} & \mathcal{O}_X(E_2) \ar[r]^{\beta} \ar@/^1pc/[r]^{\bar{\beta}}& \mathcal{O}_X(E_1+E_2) \ar[r]^{\gamma_1} \ar@<-1ex>[r]_{\gamma_2} & \mathcal{O}_X(H) \ar@<2ex>[r]^{\delta_1} \ar[r]^{\delta_2} \ar@<-2ex>[r]^{\delta_3}& \mathcal{O}_X(2H)}
\]
with $\bar{\beta}$ in degree 1 and relations
\begin{align*}
&\delta_1\gamma_2 = \delta_2\gamma_1,&  &\delta_1 \eta = \delta_3\gamma_1\beta\alpha,&  \\
&\delta_2\eta = \delta_3\gamma_2\beta\alpha,&  &\bar{\beta}\alpha = 0,&  \\
&\gamma_1\bar{\beta}  = 0,& &\gamma_2\bar{\beta}  = 0.&
\end{align*}

Let $D_2$ denote a strict transform of a line on $\mathbb{P}^2$ such that $D_2 \cap E_2\neq \emptyset$. The morphism $\gamma_2$ in the above quiver is zero along $D_2 + E_2$. $\alpha$ has zeros along $E_2$ and $\beta$ -- along $E_1$. These three morphisms are determined uniquely up to a constant. On the other hand, $\gamma_1$ is zero along $D_1$ -- a strict transform of a line on $\mathbb{P}^2$ such that $D_1 \cap E_1 \neq \emptyset$. The divisor $D_1$ is not determined uniquely and one can change $\gamma_1$ in the above quiver by adding some multiplicity of $\gamma_2$. 

Note that morphisms form $\mathcal{O}_X$ to $\mathcal{O}_X(H)$ are pull backs of sections of $\mathcal{O}_{\mathbb{P}^2}(H)$. As the pull back of $x_0$ in $X$ is $E_1 \cup E_2$, a section of $\mathcal{O}_X(H)$ on $X$ is either zero on both $E_1$ and $E_2$ (possibly with some multiplicities) or it is nonzero on every point of $E_1$ and $E_2$. Sections having zeros along $E_1$ and $E_2$ are linear combinations of $\gamma_1\beta \alpha$ and $\gamma_2 \beta \alpha$. 

The morphisms $\eta$ in the quiver is any section of $\mathcal{O}_X(H)$ which is nonzero on $E_1$ and $E_2$. Hence, it can be changed by adding any linear combination of $\gamma_1\beta\alpha$ and $\gamma_2 \beta\alpha$.

To obtain the relations given, we note that $\textrm{Hom}(\mathcal{O}_X,\mathcal{O}_X(H)) = \textrm{Hom}(\mathcal{O}_X(H), \mathcal{O}_X(2H))$. Then we put $\delta_1 \leftrightarrow \gamma_1\beta\alpha$, $\delta_2 \leftrightarrow \gamma_2\beta\alpha$ and $\delta_3 \leftrightarrow \eta$.

To sum up, $\alpha$, $\beta$ and $\gamma_2$ are determined uniquely. Other morphisms can change by
\begin{align*}
\gamma_1 & \rightsquigarrow \gamma_1 + a\gamma_2\\
\eta &\rightsquigarrow \eta + b\gamma_1\beta\alpha + c\gamma_2\beta \alpha
\end{align*}
for $a,b,c\in \mathbb{C}$. These morphism determine $\delta_i$'s in such a way that the above relations are satisfied. 

In order to calculate the DG quiver of this collection, we have to calculate the endomorphisms algebra of a collection $\langle \mathcal{O}_X, V, \mathcal{O}_X(E_1+E_2), \mathcal{O}_X(H), \mathcal{O}_X(2H) \rangle$, where $V$ is defined by a non-trivial extension of $\mathcal{O}_X(E_2)$ by $\mathcal{O}_X(E_1+E_2)$.
\[
\xymatrix{0 \ar[r]& \mathcal{O}_X(E_1+E_2) \ar[r]^(0.6){\phi_1}& V \ar[r]^(0.4){\phi_2}& \mathcal{O}_X(E_2) \ar[r] & 0.}
\]

The collection of sheaves $\langle \mathcal{O}_X, V, \mathcal{O}_X(E_1+E_2), \mathcal{O}_X(H), \mathcal{O}_X(2H) \rangle$ is no longer exceptional. However, we can still draw the quiver of morphisms between objects. Then we can present $\mathcal{O}_X(E_2)$ as a complex
$$
\{\mathcal{O}_X(E_1+E_2) \otimes \textrm{Hom}(\mathcal{O}_X(E_1+E_2), V) \to V\}
$$
and calculate the DG quiver of the collection $\langle \mathcal{O}_X, \mathcal{O}_X(E_2), \mathcal{O}_X(E_1+E_2),$ $ \mathcal{O}_X(H), \mathcal{O}_X(2H) \rangle$  

The quiver of $(\langle \mathcal{O}_X, V, \mathcal{O}_X(E_1+E_2), \mathcal{O}_X(H), \mathcal{O}_X(2H) \rangle$ is 
\[
\xymatrix{\mathcal{O}_X \ar[r]^{\zeta} \ar@/^2pc/[rrr]^{\eta} & V \ar[r]^(0.4){\beta\phi_2} \ar@/_2pc/[rr]^{\iota_1} \ar@/_2pc/@<-2ex>[rr]_{\iota_2}& \mathcal{O}_X(E_1+E_2) \ar@<1ex>[l]^(0.4){\phi_1}  & \mathcal{O}_X(H) \ar@<2ex>[r]^{\delta_1} \ar[r]^{\delta_2} \ar@<-2ex>[r]^{\delta_3} & \mathcal{O}_X(2H)}
\]

Here $\zeta: \mathcal{O}_X \to V$ is such a morphism that $\phi_2 \zeta = \alpha$ and $\iota_i:V \to \mathcal{O}_X(H)$ are such that $\iota_i \phi_1 = \gamma_i$. Then $\iota_1 \phi_1 \beta\phi_2 \zeta = \gamma_1\beta\alpha$ and $\iota_2 \phi_1 \beta \phi_2 \zeta = \gamma_2 \beta \alpha$. 

$\zeta$, $\iota_1$ and $\iota_2$ are not determined uniquely, they can change by
\begin{align*}
\zeta & \rightsquigarrow \zeta + d\phi_1\beta \alpha,\\
\iota_1 & \rightsquigarrow \iota_1 + e\gamma_1\beta \phi_2 + f\gamma_2\beta \phi_2,\\
\iota_2 & \rightsquigarrow \iota_2 + g\gamma_1\beta \phi_2 + h\gamma_2\beta \phi_2
\end{align*}
for $d,\ldots, h\in \mathbb{C}$.

The obvious relations in this quiver are:
\begin{align*}
& \delta_2 \eta = \delta_3 \iota_2 \phi_1 \beta \phi_2 \zeta & \delta_1 \eta = \delta_3 \iota_1 \phi_1 \beta \phi_2 \zeta.
\end{align*}

Moreover, the exact sequence
\begin{align*}
0 \rightarrow \textrm{Hom}(\mathcal{O}_X(E_2), \mathcal{O}_X(2H)) &\rightarrow \textrm{Hom}(V,\mathcal{O}_X(2H))\\
&\rightarrow \textrm{Hom}(\mathcal{O}_X(E_1+E_2), \mathcal{O}_X(2H)) \rightarrow 0
\end{align*}
shows that
$$
\delta_1 \iota_2 - \iota_2 \delta_1 \in \textrm{span}\{\delta_1 \gamma_1 \beta \phi_2, \delta_1 \gamma_2 \beta \phi_2, \delta_2 \gamma_2 \beta \phi_2, \delta_3 \gamma_1 \beta \phi_2, \delta_3 \gamma_2 \beta \phi_2\}.
$$

One has to calculate are the compositions $\iota_1\zeta$ and $\iota_2\zeta$. 

\subsubsection*{Calculation of Massey products}

The short exact sequence
\[
\xymatrix{0 \ar[r] & \mathcal{O}_X(E_1+E_2) \ar[r]^(0.7){\phi_1} & V \ar[r]^(0.4){\phi_2} & \mathcal{O}_X(E_2) \ar[r] & 0}
\] 
gives $c_1(V) = E_1 + 2E_2$ and $c_2(V) = 0$. Let us consider the short exact sequence
\[
\xymatrix{0 \ar[r] & \mathcal{O}_X \ar[r]^\zeta & V \ar[r]^{\psi} & \mathcal{F} \ar[r] & 0. }
\]
Let $T\hookrightarrow \mathcal{F}$ be the torsion part of $\mathcal{F}$. Then we have the commutative diagram
\[
\xymatrix{ & & 0&0 & \\
&0 \ar[r] &\mathcal{F}/T \ar[r]^= \ar[u] & \mathcal{F}/T \ar[u]&\\
0 \ar[r] &\mathcal{O}_X \ar[u] \ar[r]^\zeta & V \ar[r]^\psi \ar[u]&\mathcal{F}\ar[r] \ar[u] & 0\\
0 \ar[r]& \mathcal{O}_X \ar[r] \ar[u]^=& \mathcal{G} \ar[r] \ar[u]^\chi &T\ar[r] \ar[u] &0\\
& &0 \ar[u] &0 \ar[u] &\\}
\] 
in which $\mathcal{G}$ is a sheaf of rank 1. $\mathcal{G}$ fits into a short exact sequence
$$
0 \rightarrow \mathcal{G} \rightarrow V \rightarrow \mathcal{F}/T \rightarrow 0
$$
with $V$ locally free and $\mathcal{F}/T$ torsion-free. Hence, $\mathcal{G}$ is torsion-free and it injects into its double dual $\mathcal{G}^{**}$ with cokernel $T'$,
$$
0 \rightarrow \mathcal{G} \rightarrow \mathcal{G}^{**} \rightarrow T' \rightarrow 0.
$$
$T'$ is a torsion sheaf and a subsheaf of $\mathcal{F}/T$, hence $T' = 0$ and $\mathcal{G}$ is reflexive.  Hartshorne in \cite{bib_H} proves that every reflexive sheaf on a smooth surface is locally free and therefore we obtain that $\mathcal{G} = \mathcal{O}_X(D)$ for some effective divisor $D$. The composition of morphisms 
\[
\xymatrix{\mathcal{O}_X \ar[r] & \mathcal{O}_X(D) \ar[r]^\chi & V \ar[r]^(.3){\psi_2} & \mathcal{O}_X(E_2)}
\]
is equal to $\beta$, so the morphism $\mathcal{O}_X(D) \rightarrow \mathcal{O}_X(E_2)$ is nonzero. It follows that $D$ is equal either to $0$ or to $E_2$. If $D = E_2$ then we would have a splitting
\[
\xymatrix{0 \ar[r] & \mathcal{O}_X(E_1 +E_2) \ar[r]^(.7){\phi_1} & V \ar[r]^{\phi_2}& \mathcal{O}_X(E_2) \ar[r] \ar@/_2pc/[l]_{\chi}& 0.}
\]
As $V$ is a nontrivial extension of $\mathcal{O}_X(E_2)$ by $\mathcal{O}_X(E_1+E_2)$ we get a contradiction. Hence, $D = 0$ and $\mathcal{F}$ is a torsion-free sheaf of rank 1. As $c_2(\mathcal{F}) =0$, $\mathcal{F}$ is a line bundle. $c_1(\mathcal{F}) = E_1+2E_2$ shows that $\mathcal{F} = \mathcal{O}_X(E_1+2E_2)$. Hence, $\zeta$ fits into the short exact sequence
\[
\xymatrix{0 \ar[r] & \mathcal{O}_X \ar[r]^\zeta & V \ar[r]^(.3)\psi & \mathcal{O}_X(E_1+2E_2) \ar[r] & 0.}
\]
Let $\mathcal{L}_i = \textrm{ker}(\iota_i)$, $\mathcal{K}_i =\textrm{coker}(\iota_i)$ and $\mathcal{M}_i =\textrm{Im}(\iota_i)$. There are three short exact sequences:
\[
\xymatrix{
0 \ar[r] &  \mathcal{L}_i \ar[r]& V \ar[r] & \mathcal{M}_i \ar[r] & 0,\\
0\ar[r]& \mathcal{M}_i \ar[r]& \mathcal{O}_X(H)\ar[r]& \mathcal{K}_i \ar[r]& 0,\\
0 \ar[r] & \mathcal{O}_X(E_1+E_2) \ar[r] & \mathcal{M}_i \ar[r] & \mathcal{N}_i \ar[r]& 0.}
\]
The diagram
\[
\xymatrix{ & 0 & 0 & 0 & \\
0 \ar[r] & \mathcal{O}_X(E_1+ E_2) \ar[r] \ar [u]& \mathcal{M}_i\ar[u] \ar[r] & \mathcal{N}_i \ar[r] \ar[u] & 0\\
0 \ar[r] & \mathcal{O}_X(E_1 + E_2) \ar[r] \ar[u]_=& V \ar[r] \ar[u] & \mathcal{O}_X(E_2) \ar[r] \ar[u]& 0 \\
& 0\ar[u] \ar[r] & \mathcal{L}_i \ar[r]^= \ar[u] & \mathcal{L}_i \ar[u]&\\
& & 0\ar[u] & 0 \ar[u] & }
\]

gives relations between the Chern classes:
\begin{align*}
c_1(V)& = E_1 + 2E_2= c_1(\mathcal{L}_i) + c_1(\mathcal{M}_i),\\
c_2(V) &= 0 = c_1(\mathcal{L}_i)c_1(\mathcal{M}_i) + c_2(\mathcal{M}_i),\\
H &= c_1(\mathcal{M}_i) + c_1(\mathcal{K}_i),\\
c_2(\mathcal{O}_X(H)) &=0 = c_1(\mathcal{M}_i)c_1(\mathcal{K}_i) + c_2(\mathcal{M}_i) + c_2(\mathcal{K}_i),\\
c_1(\mathcal{M}_i) &= E_1 + E_2 + c_1(\mathcal{N}_i),\\
c_2(\mathcal{M}_i) &= (E_1+E_2)c_1(\mathcal{N}_i) + c_2(\mathcal{N}_i),\\
E_2 &= c_1(\mathcal{L}_i) + c_1(\mathcal{N}_i),\\
0 &= c_1(\mathcal{L}_i)c_1(\mathcal{N}_i) + c_2(\mathcal{N}_i).
\end{align*}

Diagrams
\[
\xymatrix{& &0 &0 & \\
&0 \ar[r] & \mathcal{K}_1 \ar[r]^= \ar[u]& \mathcal{K}_1 \ar[u] & \\
0 \ar[r]&\mathcal{O}_X(E_1+E_2) \ar[r] \ar[u] &\mathcal{O}_X(H)\ar[r] \ar[u] &\mathcal{O}_{D_1}(H) \ar[r] \ar[u] & 0\\
0 \ar[r]& \mathcal{O}_X(E_1+E_2) \ar[r] \ar[u]^= & \mathcal{M}_1 \ar[r] \ar[u]&\mathcal{N}_1 \ar[r] \ar[u] &0 \\
& 0 \ar[u] &0\ar[u] &0\ar[u] & }
\]

\[
\xymatrix{& &0 &0 & \\
&0 \ar[r] & \mathcal{K}_2 \ar[r]^= \ar[u]& \mathcal{K}_2 \ar[u] & \\
0 \ar[r]&\mathcal{O}_X(E_1+E_2) \ar[r] \ar[u] &\mathcal{O}_X(H)\ar[r] \ar[u] &\mathcal{O}_{D_2 + E_2}(H) \ar[r] \ar[u] & 0\\
0 \ar[r]& \mathcal{O}_X(E_1+E_2) \ar[r] \ar[u]^= & \mathcal{M}_2 \ar[r] \ar[u]&\mathcal{N}_2 \ar[r] \ar[u] &0 \\
&0 \ar[u] &0\ar[u] &0\ar[u] & }
\]

show that the support of $N_1$ is $D_1$, and the support of $N_2$ is contained in $D_2\cup E_2$.

$\mathcal{K}_1$ can be supported on a finite number of points or on the whole $D_1$. In the second case we would get that $\mathcal{M}_1$ - the image of $\iota_1$ is equal to $\mathcal{O}_X(E_1 + E_2)$ and hence the sequence
\[
\xymatrix{0 \ar[r] & \mathcal{O}_X(E_1+E_2) \ar[r]^(.7){\psi_1} & V \ar[r]^(.4){\psi_2} & \mathcal{O}_X(E_2) \ar[r] & 0}
\]
splits. But $V$ is a nontrivial extension of $\mathcal{O}_X(E_2)$ by $\mathcal{O}_X(E_1+E_2)$ so we know that $\mathcal{K}_1$ must be supported on a finite number of points. Hence, $c_1(\mathcal{K}_1) = 0$ and $c_1(\mathcal{N}_1) = D_1$.

An analogous argument shows that the support of $\mathcal{K}_2$ can not be equal to $D_2 +E_2$. Hence, $c_1(\mathcal{K}_2)$ is either $E_2$, $D_2$ or $0$ and  $c_1(N_2) = D_2$, $E_2$ or $D_2 + E_2$.

The above diagrams give additional relations
\begin{align*}
H-E_1-E_2 = c_1(\mathcal{N}_i) + c_1(\mathcal{K}_i),\\
0 = c_1(\mathcal{N}_i)c_1(\mathcal{K}_i) + c_2(\mathcal{N}_i) + c_2(\mathcal{K}_i).
\end{align*}

For $\iota_1$ we have:
\begin{align*}
c_1(\mathcal{N}_1) & = D_1,\\
c_1(\mathcal{K}_1) &= c_1(\mathcal{O}_{D_1}(H)) - c_1(\mathcal{N}_1) = H-E_1-E_2 - D_1,\\
c_1(\mathcal{M}_1) &= H - c_1(\mathcal{K}_1) = E_1 + E_2 + D_1,\\
c_1(\mathcal{L}_1) & = E_2 - c_1(\mathcal{N}_1) = E_2 - D_1,\\
c_2(\mathcal{M}_1) & = -c_1(\mathcal{L}_1)c_1(\mathcal{M}_1) = 1,\\
c_2(\mathcal{K}_i) & = -c_1(\mathcal{M}_1)c_1(\mathcal{K}_1) - c_2(\mathcal{M}_1) =
 0,\\
c_2(\mathcal{N}_1) &= c_2(\mathcal{M}_1) - (E_1 + E_2)c_1(\mathcal{N}_1) = 0.
\end{align*}

\begin{itemize}
\item $\mathcal{M}_1 = m_x \otimes \mathcal{O}_X(E_1 + E_2 + D_1)$, where $m_x$ is the maximal ideal of functions vanishing at a point $x\in D_1$, 
\item $\mathcal{L}_1 = \mathcal{O}_X(E_2 - D_1)$,
\item $\mathcal{N}_1 = \mathcal{O}_{D_1}$,
\item $\mathcal{K}_1 = \mathcal{O}_x(H) \simeq \mathcal{O}_x$, (because of the exact sequences $\mathcal{N}_1 \rightarrow \mathcal{O}_{D_1}(H) \rightarrow \mathcal{K}_1$ and $m_x \otimes \mathcal{O}_X(E_1 + E_2 + D_1) \rightarrow \mathcal{O}_X(H) \rightarrow \mathcal{O}_x(H)$).
\end{itemize}

We want to know whether the composition
\[ 
\xymatrix{\mathcal{O}_X \ar[r]^\zeta & V \ar[r]^{\iota_1} & \mathcal{O}_X(H)} 
\]
is equal to 0 or $\eta$.

$\zeta$ and $\iota_1$ fit into a diagram
\[
\xymatrix{& & 0 & &\\
& \mathcal{O}_X \ar[r] &  m_x \otimes \mathcal{O}_X(E_1 + E_2 + D_1)\ar[u] \ar[r]& \mathcal{O}_{\widetilde{D}}&\\
0 \ar[r] & \mathcal{O}_X \ar[r]^\zeta \ar[u]_= & V \ar[r] \ar[u]^{\iota_1} & \mathcal{O}_X(E_1 + 2E_2)\ar[r] \ar[u]& 0\\
&0 \ar[r] \ar[u] & \mathcal{O}_X(E_2 - D_1)\ar[u] \ar[r]^=  & \mathcal{O}_X(E_2 - D_1) \ar[u]&\\
& & 0\ar[u] & &}
\]

We tensor this diagram with $\mathcal{O}_{E_2}$. 

$\mathcal{O}_X(E_1+E_2+D_1)\otimes \mathcal{O}_{E_2} = \mathcal{O}_{E_2}$, $m_x\otimes \mathcal{O}_{E_2} = \mathcal{O}_{E_2}$ as $x \notin E_2$ so there exists an epimorphism $V|_{E_2} \to \mathcal{O}_{E_2}$ with kernel $\mathcal{O}_{E_2}(-1)$. 
\[
\xymatrix{& & 0 & &\\
& & \mathcal{O}_{E_2} \ar[u] & &\\
0 \ar[r] & \mathcal{O}_{E_2} \ar[r]_{\zeta|_{E_2}} & V|_{E_2} \ar[r]_{\psi|_{E_2}} \ar[u]_{\iota_1|_{E_2}} & \mathcal{O}_{E_2}(-1)\ar[r] & 0\\
& & \mathcal{O}_{E_2}(-1)\ar[u]  & &\\
& & 0\ar[u] & &}
\]

It follows that $V|_{E_2} = \mathcal{O}_{E_2}\oplus \mathcal{O}_{E_2}(-1)$ and the composition 
\[ 
\xymatrix{\mathcal{O}_X \ar[r]^{\zeta} &  V \ar[r]^{\iota_1} & \mathcal{M}_1 \ar@{^{(}->}[r] & \mathcal{O}_X(H)}
\] 
restricted to $E_2$ is an isomorphism. Hence $\iota_1 \zeta$ does not have zeros along $E_2$. Changing $\iota_1$ if necessary we obtain that $\iota_1\zeta = \eta$.

For $\iota_2$ there are three possibilities 
$$
c_1(\mathcal{N}_2) = \left\{ \begin{array}{lr} D_2,& \textrm{case }(A)\\ E_2,& \textrm{case }(B) \\ E_2 + D_2. & \textrm{case }(C) \end{array} \right.
$$
Hence,
$$
c_1(\mathcal{M}_2) = \left\{\begin{array}{lr} E_1 + E_2 + D_2,& \textrm{case }(A) \\ E_1 + 2E_2,& \textrm{case }(B) \\ E_1 + 2E_2 + D_2.& \textrm{case }(C) \end{array} \right.
$$
$\mathcal{M}_2$ is a subsheaf of $\mathcal{O}_X(H)$ and hence it is of the form $\mathcal{O}_X(L)\otimes \mathcal{I}_Z$, where $\mathcal{I}_Z$ is an ideal sheaf of a set of points $Z\in D_2 \cup E_2$. Then $c_1(\mathcal{M}_2) = L$ and $c_2(\mathcal{M}_2) = \textrm{deg}(Z) \geq 0$.

In case (A) we have:
$$
\mathcal{L}_2 = \mathcal{O}_X(E_2-D_2), \quad \mathcal{N}_2 = \mathcal{O}_{D_2}, \quad c_1(\mathcal{M}_2) = E_1+E_2+D_2, \quad c_2(\mathcal{M}_2) = -1.
$$
The second Chern class of $\mathcal{M}_2$ is negative and it follows that this case cannot happen.

In case (B) we have:
$$
\mathcal{L}_2 = \mathcal{O}_X, \quad \mathcal{N}_2 = \mathcal{O}_{E_2}, \quad c_1(\mathcal{M}_2) = E_1+2E_2, \quad c_2(\mathcal{M}_2) = 0.
$$
On $X$ we have an inclusion $\mathcal{O}_X(E_1 + E_2) \to \mathcal{M}_2 \to \mathcal{O}_X(H)$. Tensoring with $\mathcal{O}_{D_2}$ we get $\mathcal{O}_{D_2}(E_1 + E_2) = \mathcal{O}_{D_2}(1)$ and $\mathcal{O}_{D_2}(H) = \mathcal{O}_{D_2}(1)$. It follows that  $\mathcal{M}_2\otimes \mathcal{O}_{D_2}$ modulo torsion is equal to $\mathcal{O}_{D_2}(1)$. Whereas, in this case it equals to $\mathcal{O}_{D_2}(2)$.

Thus, we are left we case (C):
$$
\mathcal{L}_2 = \mathcal{O}_X(-D_2), \quad \mathcal{N}_2 = \mathcal{O}_{D_2+E_2}, \quad c_1(\mathcal{M}_2) = E_1+2E_2+D_2, \quad c_2(\mathcal{M}_2) = 1.
$$
We obtain that $\mathcal{M}_2 = m_y\otimes \mathcal{O}_X(E_1+2E_2+D_2)$ for some $y\in E_2 \cup D_2$. If $y \in D_2$ then $\mathcal{M}_2\otimes \mathcal{O}_{D_2} = \mathcal{O}_{D_2}$ and there is no epimorphism from $V|_{D_2} = \mathcal{O}_{D_2}(1) \oplus \mathcal{O}_{D_2}(1)$ onto $\mathcal{O}_{D_2}$. So $y\in E_2 \setminus D_2$.

Thus, we know that $\iota_2\zeta$ is zero on a point $y\in E_2$. Hence it has zeros along $E_2$ and one can choose $\iota_2$ in such a way that $\iota_2 \zeta =0$.

To sum up, the collection (not exceptional!) \mbox{$\langle \mathcal{O}_X, V, \mathcal{O}_X(E_1 +E_2), \mathcal{O}_X(H), \mathcal{O}_X(2H) \rangle$} has a quiver 
\[
\xymatrix{\mathcal{O}_X \ar[r]^{\zeta} & V \ar[r]^{\beta\phi_2} \ar@/_2pc/[rr]^{\iota_1} \ar@/_2pc/@<-2ex>[rr]_{\iota_2}& \mathcal{O}_X(E_1+E_2) \ar@<1ex>[l]^{\phi_1}  & \mathcal{O}_X(H) \ar@<2ex>[r]^{\delta_1} \ar[r]^{\delta_2} \ar@<-2ex>[r]^{\delta_3} & \mathcal{O}_X(2H)}
\]
with relations:
\begin{align*}
&\iota_1\zeta = \eta,  \quad \beta\phi_2\phi_1 =0,  \quad \iota_2\zeta = 0, \\  &\delta_1\iota_1\zeta = \delta_3\iota_1\phi_1\beta\phi_2\zeta,  \quad \delta_2\iota_1\zeta  = \delta_3\iota_2\phi_1\beta\phi_2\zeta,\\
&\delta_1\iota_2 - \delta_2\iota_1  \in \textrm{span}\{\delta_1 \iota_1 \phi_1 \beta \phi_2, \delta_1 \iota_2 \phi_1 \beta \phi_2, \delta_2 \iota_2 \phi_1 \beta \phi_2, \delta_3 \iota_1 \phi_1 \beta \phi_2, \delta_3 \iota_2 \phi_1 \beta \phi_2\}.
\end{align*}

Recall that some of the morphisms can be changed:
\begin{align*}
\gamma_1 & \rightsquigarrow \gamma_1 + a\gamma_2\\
\eta &\rightsquigarrow \eta + b\gamma_1\beta\alpha + c\gamma_2\beta \alpha,\\
\zeta & \rightsquigarrow \zeta + d\phi_1\beta \alpha,\\
\iota_1 & \rightsquigarrow \iota_1 + e\gamma_1\beta \phi_2 + f\gamma_2\beta \phi_2,\\
\iota_2 & \rightsquigarrow \iota_2 + g\gamma_1\beta \phi_2 + h\gamma_2\beta \phi_2
\end{align*}
and $\delta_1 \leftrightarrow \gamma_1\beta\alpha$, $\delta_2\leftrightarrow \gamma_2\beta\alpha$, $\delta_3\leftrightarrow \eta$.

A change $\gamma_1 \rightsquigarrow \gamma_1 + a\gamma_2$ changes $\iota_1$ to $\iota_1 + a\iota_2$ (because $\iota_1 \phi_1 = \gamma_1$). As also $\delta_1$ depends on $\gamma_1$ we get
$$
\delta_1\iota_2 - \delta_2\iota_1 \rightsquigarrow (\delta_1 + a\delta_2)\iota_2 - \delta_2(\iota_1 + a\iota_2) = \delta_1\iota_2 + a\delta_2\iota_2 - \delta_2\iota_1 - a \delta_2\iota_2 = \delta_1\iota_2 - \delta_2\iota_1
$$
and hence the parameter $a$ has no influence on the relation between $\delta_1\iota_2$ and $\delta_2 \iota_1$.

As we want $\iota_2\zeta = 0$ the calculations
$$
(\iota_2 + g\gamma_1\beta \phi_2 + h\gamma_2\beta \phi_2)(\zeta + d\phi_1\beta \alpha) = \iota_2 \zeta + (d+h) \gamma_2\beta\alpha + g \gamma_1\beta\alpha
$$
show that $g$ must be $0$ and $h$ can be arbitrary, as putting $d=-h$ will not lead to any changes in this relation. 

The morphism $\eta = \iota_1\zeta$ can be changed by any combination of $\gamma_1\beta \alpha$ and $\gamma_2\beta \alpha$ so there is no condition on $e$ and $f$.

Calculating
$$
\delta_1(\iota_2 + h\gamma_2\beta\phi_2)- \delta_2(\iota_1 + e\gamma_1\beta \phi_2 + f\gamma_2\beta \phi_2) = \delta_1\iota_2 - \delta_2\iota_1 + (h-e) \delta_1\gamma_2\beta\phi_2 - f \delta_2\gamma_2\beta\phi_2
$$
we see that $e$ and $f$ can be chosen in such a way that
$$
\delta_1\iota_2 - \delta_2\iota_1  = A \delta_1 \iota_1 \phi_1 \beta \phi_2 + B \delta_3 \iota_1 \phi_1 \beta \phi_2+ C \delta_3 \iota_2\phi_1 \beta \phi_2.
$$
Furthermore, the relation $\delta_3\iota_2\phi_1\beta\phi_2\zeta = \delta_2\iota_1\zeta$ gives
\begin{align*}
0  = \delta_3\iota_2\phi_1\beta\phi_2\zeta - \delta_2\iota_1\zeta &= \delta_3\iota_2\phi_1\beta\phi_2\zeta - \delta_1\iota_2\zeta + A \delta_1 \iota_1 \phi_1 \beta \phi_2\zeta + B \delta_3 \iota_1 \phi_1 \beta \phi_2\zeta \\ + C \delta_3 \iota_2\phi_1 \beta \phi_2\zeta & = A \delta_1 \iota_1 \phi_1 \beta \phi_2\zeta + B \delta_3 \iota_1 \phi_1 \beta \phi_2\zeta + (C+1) \delta_3 \iota_2\phi_1 \beta \phi_2\zeta
\end{align*}
and leads to $A = 0 =B$ and $C=-1$ due to linear independence of $\delta_1 \iota_1 \phi_1 \beta \phi_2\zeta$, $\delta_3 \iota_1 \phi_1 \beta \phi_2\zeta$ and $ \delta_3 \iota_2\phi_1 \beta \phi_2\zeta$. 

Note, that the relation $\delta_1\iota_2 + \delta_3\iota_2\phi_1\beta\phi_2 = \delta_2\iota_1$ composed with $\zeta$ gives $\delta_3\iota_2\phi_1\beta\phi_2\zeta = \delta_2\iota_1\zeta$.

Hence, relations in the above quiver are:
\begin{align*}
\iota_2\zeta &=0, & \beta\phi_2\phi_1 & = 0,&\\
\delta_1\iota_1\zeta & = \delta_3\iota_1\phi_1\beta\phi_2\zeta, & \delta_1\iota_2 + \delta_3\iota_2\phi_1\beta\phi_2 & = \delta_2\iota_1.&
\end{align*}

The collection $\langle \mathcal{O}_X, \mathcal{O}_X(E_2), \mathcal{O}_X(E_1+E_2), \mathcal{O}_X(H), \mathcal{O}_X(2H) \rangle$ has the DG quiver of the collection
$$
\begin{array}{ccccc} & \mathcal{O}_X(E_1 +E_2) & & & \\ (\mathcal{O}_X, &\Big\downarrow\rlap{$\scriptstyle{\phi_1}$} & \mathcal{O}_X(E_1+E_2), & \mathcal{O}_X(H), & \mathcal{O}_X(2H)) \\& V & & &  \end{array}
$$

Morphisms from $\mathcal{O}_X$ to $\mathcal{O}_X(E_2)$ have basis
\[
\xymatrix{& & \mathcal{O}_X(E_1+E_2)\ar[dd]^{\phi_1} & & &\mathcal{O}_X(E_1+E_2) \ar[dd]^{\phi_1} \\
a_1 = & &  & a_2 = & & \\
         & \mathcal{O}_X\ar[uur]^{\beta\phi_2\zeta}& V & & \mathcal{O}_X\ar[r]^{\beta\phi_2\zeta\phi_1}& V}
\]
\[
\xymatrix{ & & & \mathcal{O}_X(E_1 + E_2)\ar[dd]^{\phi_1} \\
 &  a_3 = & &\\
& & \mathcal{O}_X \ar[r]^\zeta & V}
\]
with $\partial(a_1) = a_2$.

Morphisms from $\mathcal{O}_X(E_2)$ to $\mathcal{O}_X(E_1+E_2)$ have basis:
\[
\xymatrix{ &\mathcal{O}_X(E_1+E_2)\ar[dd]^{\phi_1} \ar[ddr]^{\textrm{id}} & & & \mathcal{O}_X(E_1+E_2) \ar[dd]^{\phi_1} &  \\
b_1 = & & & b_2 = & &          \\
      & V & \mathcal{O}_X(E_1+E_2)& &V \ar[r]^{\beta\phi_2}& \mathcal{O}_X(E_1+E_2)      \\}
\]
with $b_1$ in degree 1.

Morphisms from $\mathcal{O}_X(E_2)$ to $\mathcal{O}_X(H)$ are
\[
\xymatrix{& \mathcal{O}_X(E_1+E_2)\ar[dd]^{\phi_1} & & & \mathcal{O}_X(E_1+E_2) \ar[dd]^{\phi_1} \ar[ddr]^{\iota_1\phi_1} & & &  \\
c_1 = & & &  c_2 = & & &    \\
&  V \ar[r]^{\iota_1}& \mathcal{O}_X(H) & & V& \mathcal{O}_X(H) & }
\]
\[
\xymatrix{ & \mathcal{O}_X(E_1+E_2) \ar[dd]^{\phi_1} & & & \mathcal{O}_X(E_1+E_2)\ar[dd]^{\phi_1} & & \\
 c_3= & & & c_4 = & & & \\
& V \ar[r]^{\iota_1\phi_1\beta\phi_2} & \mathcal{O}_X(H) & &  V \ar[r]^{\iota_2}& \mathcal{O}_X(H) & }
\]
\[
\xymatrix{ & \mathcal{O}_X(E_1+E_2) \ar[dd]^{\phi_1} \ar[ddr]^{\iota_2\phi_1} & & & \mathcal{O}_X(E_1+E_2) \ar[dd]^{\phi_1} & & \\
  c_5 = & & & c_6= & & &     \\
& V& \mathcal{O}_X(H) & & V \ar[r]^{\iota_2\phi_1\beta\phi_2} & \mathcal{O}_X(H) &        \\}
\]
with $\partial(c_1) = c_2$ and $\partial(c_4) = c_5$.

Morphisms from $\mathcal{O}_X(E_2)$ to $\mathcal{O}_X(2H)$ factor through $\mathcal{O}_X(H)$ so can be written as $\delta_i c_j$.

Relations between these morphisms are
\begin{align*}
b_1 a_3 & = 0,& b_2a_3 & = \beta \phi_2 \zeta ,& \iota_1 \phi_1 b_1 & = c_2,&\\
\iota_1 \phi_1 b_2 & = c_3,& \iota_2 \phi_1 b_1 & = c_5,& \iota_2 \phi_1 b_2 & = c_6,&\\
\partial(c_1) & = \iota_1 \phi_1 b_1,& \partial(c_4) & = \iota_2 \phi_1 b_1,& c_1 a_3 & = \iota_1 \zeta,& \\
c_4 a_3 & =0,& \delta_2 c_1 a_3 & = \delta_3 \iota_2 \phi_1 b_2 a_3, & \delta_1 c_4 & = \delta_2 c_1,&\\
\delta_1 c_1 a_3 & = \delta_3 \iota_1 \phi_1 b_2 a_3,& \delta_1c_4 + \delta_3c_6  & = \delta_2 c_1.& & &
\end{align*}

Putting $\alpha = a_3$, $\beta= b_2$, $\bar{\beta} = b_1$, $\gamma_1 = \iota_1 \phi_1$, $\gamma_2 = \iota_2 \phi_2$, $\epsilon_1 = c_1$ and $\epsilon_2 = c_4$ we obtain the following DG quiver 
\[
\xymatrix{\mathcal{O}_X \ar[r]^{\alpha}  & \mathcal{O}_X(E_2) \ar[r]^{\beta} \ar@/^1pc/[r]^{\bar{\beta}} \ar@/_2pc/[rr]^{\epsilon_1} \ar@/_2pc/@<-2ex>[rr]_{\epsilon_2} & \mathcal{O}_X(E_1+E_2) \ar[r]^{\gamma_1} \ar@<-1ex>[r]_{\gamma_2} & \mathcal{O}_X(H) \ar@<2ex>[r]^{\delta_1} \ar[r]^{\delta_2} \ar@<-2ex>[r]^{\delta_3}& \mathcal{O}_X(2H)}
\]
with relations and differentials given by
\begin{align*}
\partial(\epsilon_1) & = \gamma_1 \bar{\beta}, & \partial(\epsilon_2) & = \gamma_2 \bar{\beta},& \epsilon_2 \alpha & = 0,&\\
\delta_1 \gamma_2 & = \delta_2 \gamma_1, &  \delta_2 \epsilon_1\alpha & = \delta_3 \gamma_2 \beta \alpha,& \delta_1\epsilon_2 + \delta_3 \gamma_2 \beta &= \delta_2\epsilon_1.& 
\end{align*}

\begin{rem}
In this case, in order to find the DG category associated to an exceptional collection one has to calculate triple Massey product of morphisms between its elements. 
\end{rem}

\section{Non-commutative deformation}

As in the above example let $X$ be the blow up of $\mathbb{P}^2$ along the degree 2 subscheme supported at a point $x_0$ and let $Y$ be the blow up of $\mathbb{P}^2$ along two different points $x_0$ and $x_1$. 

The Picard group of $X$ is generated by divisors $E_1$, $E_2$ and $H_X$ while the Picard group of $Y$ -- by divisors $F_1$, $F_2$ and $H_Y$. 

Let $\mathcal{X} \to \mathbb{P}^1$ be the deformation space of $Y$ to $X$. There exists divisors $D_0$, $D_1$ and $D_2$ on $\mathcal{X}$ such that 
\begin{align*}
D_0|_{\{t=0\}} & = H_X, & D_0|_{\{t \neq 0\}} &= H_Y,\\
D_1|_{\{t=0\}} & = E_1 + E_2, & D_1|_{\{t\neq 0\}} &= F_1,\\
D_2|_{\{t=0\}} & = E_2, & D_2|_{\{t\neq 0\}} &= F_2.
\end{align*}

Thus, there exists a collection $\sigma = \langle \mathcal{L}_1, \ldots,\mathcal{L}_5 \rangle$ on $\mathcal{X}$ such that 
\begin{align*}
\sigma|_{\{t=0\}} & = \langle \mathcal{O}_X, \mathcal{O}_X(E_2), \mathcal{O}_X(E_1+E_2), \mathcal{O}_X(H_X), \mathcal{O}_X(2H_X) \rangle,\\
\sigma|_{\{t\neq 0\}} &= \langle \mathcal{O}_Y, \mathcal{O}_Y(F_2), \mathcal{O}_Y(F_1), \mathcal{O}_Y(H_Y), \mathcal{O}_Y(2H_Y)\rangle
\end{align*} 

\begin{thm}\label{thm:deformation}
There exists a non-commutative deformation of $Y$to $X$ -- a deformation of a DG category of the collection \mbox{$\langle \mathcal{O}_Y, \mathcal{O}_Y(F_2), \mathcal{O}_Y(F_1), \mathcal{O}_Y(H_Y), \mathcal{O}_Y(2H_Y) \rangle$} to a DG category of the collection \mbox{$\langle \mathcal{O}_X, \mathcal{O}_X(E_2), \mathcal{O}_X(E_1+E_2), \mathcal{O}_X(H_X), \mathcal{O}_X(2H_X) \rangle.$} 
\end{thm}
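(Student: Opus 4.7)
The plan is to construct an explicit family of DG algebras over $\mathbb{A}^1$ arising from the geometric family $\mathcal{X}\to \mathbb{P}^1$, whose fibers at $t=0$ and $t\neq 0$ realize the two DG categories of the statement. First I would compute the DG category of the collection on $Y$: since $F_1$ and $F_2$ are disjoint exceptional divisors over distinct points, standard cohomology and base-change computations analogous to those of the previous section give $\mathrm{Ext}^k(\mathcal{L}_i|_Y,\mathcal{L}_j|_Y)=0$ for all $k>0$ and $i\le j$. Hence the collection on $Y$ is strong exceptional and, by the Remark following Theorem~\ref{thm_finite}, its DG category is quasi-isomorphic to an ordinary algebra $A_Y$, given by the quiver of the previous section with the degree-one arrow $\bar{\beta}$ (whose $\mathrm{Ext}^1$ has vanished) and the auxiliary morphisms $\epsilon_1,\epsilon_2$ removed.

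Next I would produce the family. Applying the universal extension construction of Theorem~\ref{thm:universal} to the line bundles $\mathcal{L}_i$ on $\mathcal{X}$ relative to $\mathbb{P}^1$, one obtains a relative tilting object whose sheaf of relative endomorphism DG algebras gives a family $\mathcal{A}_t$ on $\mathbb{A}^1$. The relevant relative $\mathcal{E}xt^1$ sheaf is a skyscraper at $t=0$, so the relative universal extension coincides with $\mathcal{L}_2$ away from $t=0$ and with the sheaf $V$ of the previous section at $t=0$. Concretely this family can be modelled on the graded quiver of $A_X$ with differentials and relations deformed by $t$: for $t\neq 0$ an additional boundary trivialises $\bar{\beta}$ in cohomology, while at $t=0$ this boundary degenerates and one recovers $A_X$.

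Finally I would check the fibers. Specialising $t=0$ gives back $A_X$. For $t\neq 0$ the deformed differential kills $\bar{\beta}$, $\epsilon_1$ and $\epsilon_2$ in cohomology, so the minimal model of $\mathcal{A}_t$ is the ordinary algebra $A_Y$ and $\mathcal{A}_t$ is $A_\infty$-quasi-isomorphic to $A_Y$. The main obstacle is the jump in $\mathrm{Hom}$-dimensions at $t=0$, which prevents the family from being flat as a family of vector bundles; flatness must instead be controlled at the level of $A_\infty$-quasi-isomorphism classes. The functoriality of the universal DG category construction $\Omega B_\infty(\cdot)$ in $A_\infty$-quasi-isomorphisms, recalled in Section~2, is exactly what makes this work and yields the non-commutative deformation claimed.
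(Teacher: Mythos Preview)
Your approach diverges from the paper's, which is purely computational: the paper writes down an explicit one--parameter family $\Delta(t)$ of DG quivers (same underlying graded quiver as the one found for $X$, with $\partial\beta=t\bar\beta$ and one relation deformed by~$t$), checks that $\Delta(0)$ is literally the DG quiver of the $X$--collection from the previous section, and for $t\neq 0$ performs an explicit change of generators $\epsilon_i(t)=\gamma_i\beta-t\epsilon_i$, $\delta_3(t)=\delta_1-t\delta_3$, $\eta=\beta\alpha$ to show that $H^0(\Delta(t))$ is the path algebra of the (strong) $Y$--collection. No relative sheaf theory or $A_\infty$ machinery is used.

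Your proposal has a concrete error and a real gap. The error is your description of $A_Y$: you say it is ``the quiver of the previous section with $\bar\beta$ and $\epsilon_1,\epsilon_2$ removed''. But on $Y$ the divisors $F_1,F_2$ are disjoint, so $\mathrm{Hom}(\mathcal{O}_Y(F_2),\mathcal{O}_Y(F_1))=0$ and $\beta$ disappears as well; consequently the two--dimensional space $\mathrm{Hom}(\mathcal{O}_Y(F_2),\mathcal{O}_Y(H_Y))$ can no longer factor through $\mathcal{O}_Y(F_1)$ and must be carried by direct arrows --- these are precisely the $\widetilde\epsilon_i$ in the paper's $Y$--quiver. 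A new arrow $\widetilde\eta:\mathcal{O}_Y\to\mathcal{O}_Y(F_1)$ also appears. So $A_Y$ is not obtained from the $X$--quiver by deletion, and your sentence ``the deformed differential kills $\bar\beta,\epsilon_1,\epsilon_2$ in cohomology'' misidentifies what survives: it is $\beta$ that becomes exact, while the combinations $\gamma_i\beta-t\epsilon_i$ persist.

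The gap is that the substance of the theorem lies in exhibiting a family whose \emph{relations} specialise correctly on both sides, and you do not do this. Invoking a ``sheaf of relative endomorphism DG algebras'' coming from a relative universal extension is plausible heuristics, but you neither define it nor verify its fibres; in particular you give no argument that the Massey--product relation $\delta_1\epsilon_2+\delta_3\gamma_2\beta=\delta_2\epsilon_1$ on $X$ deforms to the relation $\widetilde\delta_3\widetilde\epsilon_2=\widetilde\delta_2\widetilde\epsilon_1$ on $Y$. That verification --- done by hand in the paper via the $t$--dependent relation $\delta_1\epsilon_2+\delta_3\gamma_2\beta=\delta_2\epsilon_1+t\,\delta_3\epsilon_2$ --- is the actual content of the theorem.
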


\begin{proof}
Let $t\in \mathbb{P}^1$ be a parameter and let us consider a DG quiver $\Delta(t)$ 
\[
\xymatrix{&\bullet \ar@<-1ex>[dd]_{\beta} \ar[dd]^{\bar{\beta}} \ar[dr]_{\epsilon_1} \ar@<1ex>[dr]^{\epsilon_2} & & \\
\bullet \ar[ur]^{\alpha} & &\bullet \ar@<2ex>[r]^{\delta_1} \ar[r]^{\delta_2} \ar@<-2ex>[r]^{\delta_3} & \bullet  \\
& \bullet \ar@<-1ex>[ur]^{\gamma_1} \ar@<-2ex>[ur]_{\gamma_2} & &} 
\]
with $\bar{\beta}$ in degree 1, differentials
$$
\partial{\beta}  = t \bar{\beta}, \quad \partial(\epsilon_1)  = \gamma_1\bar{\beta},\quad \partial(\epsilon_2)  = \gamma_2\bar{\beta}
$$

and relations
\begin{align*}
\epsilon_2 \alpha& = 0, & \bar{\beta}\alpha & = 0,& \delta_1\epsilon_1\alpha & = \delta_3\gamma_1\beta\alpha, &\\
\delta_1\gamma_2 & = \delta_2\gamma_1, & \delta_1\epsilon_2 + \delta_3\gamma_2 \beta & = \delta_2\epsilon_1 + t \delta_3\epsilon_2.&&&
\end{align*}

Let $D(t)$ denote the DG algebra of paths of this DG quiver.

For $t=0$, $\Delta(t)$ is the DG quiver of the collection $\langle \mathcal{O}_X, \mathcal{O}_X(E_2), \mathcal{O}_X(E_1+E_2), \mathcal{O}_X(H_X), \mathcal{O}_X(2H_X) \rangle$. 

For $t\neq 0$, $D(t)$ has cohomology groups concentrated in degree 0 and hence is quasi-isomorphic to the path algebra of the following quiver
\[
\xymatrix{&\bullet \ar[dr]_{\epsilon_1(t)} \ar@<1ex>[dr]^{\epsilon_2(t)} & & \\
\bullet \ar[dr]_{\eta} \ar[ur]^{\alpha} & &\bullet \ar@<3ex>[r]^{\delta_1} \ar[r]^{\delta_2} \ar@<-3ex>[r]^{\delta_3(t)} & \bullet  \\
& \bullet \ar@<-1ex>[ur]^{\gamma_1} \ar@<-2ex>[ur]_{\gamma_2} & &} 
\]
where
\begin{align*}
\eta & = \beta\alpha, & \epsilon_1(t) & = \gamma_1\beta - t\epsilon_1, \\
\epsilon_2(t) & = \gamma_2\beta - t \epsilon_2, & \delta_3(t) & = \delta_1 - t\delta_3.
\end{align*}

Relations between morphisms are:
\begin{align*}
\delta_1\gamma_2 & = \delta_2\gamma_1, \\
\epsilon_2(t)\alpha & = \gamma_2\beta\alpha - t\epsilon_2\alpha = \gamma_2\eta,\\
\delta_2\epsilon_1(t) & = \delta_2\gamma_1\beta - t\delta_2\epsilon_1 = \delta_1\gamma_2\beta -t(\delta_1\epsilon_2 + \delta_3\gamma_2\beta - t\delta_3\epsilon_2) \\
& = (\delta_1-t\delta_3)(\gamma_2\beta - t\epsilon_2) = \delta_3(t)\epsilon_2(t).
\end{align*}
As $\epsilon_1 = \frac{1}{t}(\gamma_1\beta -\epsilon_1(t))$ and $\delta_3 = \frac{1}{t}(\delta_1-\delta_3(t))$ the relation $\delta_1\epsilon_1\alpha = \delta_3\gamma_1\beta\alpha $ can be written as:
\begin{align*}
0 & = \delta_1(\gamma_1\beta - \epsilon_1(t))\alpha -(\delta_1 - \delta_3(t))\gamma_1\eta \\
& = \delta_1\gamma_1\eta - \delta_1\epsilon_1(t)\alpha - \delta_1\gamma_1\eta + \delta_3(t)\gamma_1\eta = \delta_3(t)\gamma_1\eta - \delta_1\epsilon_1(t)\alpha. 
\end{align*}
Hence, for $t\neq 0$ the quiver $\Delta(t)$ is quasi-isomorphic (its DG path algebra is quasi-isomorphic) to (the path algebra of) the quiver
\[
\xymatrix{&\bullet \ar[dr]_{\widetilde{\epsilon_1}} \ar@<1ex>[dr]^{\widetilde{\epsilon_2}} & & \\
\bullet \ar[dr]_{\widetilde{\eta}} \ar[ur]^{\widetilde{\alpha}} & &\bullet \ar@<3ex>[r]^{\widetilde{\delta_1}} \ar[r]^{\widetilde{\delta_2}} \ar@<-3ex>[r]^{\widetilde{\delta_3}} & \bullet  \\
& \bullet \ar@<-1ex>[ur]^{\widetilde{\gamma_1}} \ar@<-2ex>[ur]_{\widetilde{\gamma_2}} & &} 
\]
with relations
\begin{align*}
\widetilde{\delta_1}\widetilde{\gamma_2} & = \widetilde{\delta_2}\widetilde{\gamma_1}, &
\widetilde{\delta_3}\widetilde{\epsilon_2} & = \widetilde{\delta_2}\widetilde{\epsilon_1},& \\ \widetilde{\epsilon_2}\widetilde{\alpha} & = \widetilde{\gamma_2}\widetilde{\eta}, &\widetilde{\delta_3}\widetilde{\gamma_1}\widetilde{\eta} & = \widetilde{\delta_1}\widetilde{\epsilon_1}\widetilde{\alpha}.&
\end{align*}

This is the quiver with relations of a collection $\langle \mathcal{O}_Y, \mathcal{O}_Y(F_2), \mathcal{O}_Y(F_1), \mathcal{O}_Y(H_Y), \mathcal{O}_Y(2H_Y) \rangle$ on $Y$.
\end{proof}
Note that if we denote by $L_i = Z(\widetilde{\delta_i})$ then $x_0 \in L_1$, $x_1\in L_3$ and both $x_0$ and $x_1$ lie on $L_2$. The described deformation does not influence neither $L_1$, $L_2$ nor $x_0$. For $t\neq 0$, let $L_3(t) = Z(\delta_3(t))$ and $x_1(t) = L_1\cap L_3(t)$. Then as $t$ goes to $0$ the point $x_1(t)$ goes to $x_0$ along the fixed line $L_2$ and $L_3(t)$ is approaching $L_1$. 

\vspace{0.6cm}

\textbf{Acknowledgements} The author is greatly indebted to prof. Alexey Bondal for the introduction to the problem and many fruitful discussions and to prof. Adrian Langer for drawing the author's attention the paper \cite{bib_HP} and a lot of help with the computations.

\end{document}